  \long\def\hide#1\endhide{}
\newtheorem{theor}{Theorem}[section]
\newenvironment{theorem}{\begin{theor}}{\end{theor}}
\newtheorem{lemma}[theor]{Lemma}
\newtheorem{proposition}[theor]{Proposition}
\newenvironment{prop}{\begin{proposition}}{\end{proposition}}
\newtheorem{conj}[theor]{Conjecture}
\newtheorem{cond}[theor]{Condition}
\newtheorem{ques}[theor]{Question}
\newtheorem{corollary}[theor]{Corollary}
\newenvironment{cor}{\begin{corollary}}{\end{corollary}}
\newtheorem{state}[theor]{Statement}
\newtheorem{notn}[theor]{Notation}
\newtheorem{exx}[theor]{Example}
\newtheorem{defi}[theor]{Definition}
\newenvironment{defn}{\begin{defi}}{\end{defi}}
\newenvironment{enum}{\begin{enumerate}}{\end{enumerate}}
\newenvironment{itemiz}{\begin{itemize}}{\end{itemize}}
\numberwithin{equation}{section} \numberwithin{theor}{section}
\theoremstyle{definition}
\newtheorem{remark}[theor]{Remark}
\newenvironment{rem}{\begin{remark}}{\end{remark}}
\newcommand\fg{\mathfrak g}
\newcommand\phalf{\frac{p_1}{2}}
\newcommand\dd{\delta}
\newcommand\csu{\frac{i}{2\pi}\tr(\Theta)}
\newcommand\TVP{T^V\!P}
\newcommand\THP{T^H\!P}
\newcommand\bd{\llbracket \delta \rrbracket}
\newcommand\cwfour{\langle \Omega \wedge \Omega \rangle}
\newcommand\csthree{\alpha(\Theta)}
\newcommand\tr{\operatorname{Tr}}
\def\R{\mathbb R}
\def\Z{\mathbb Z}
\def\cH{\mathcal H}
\def\cS{\mathcal S}
\newcommand{\into}{\hookrightarrow}
\newcommand{\Id}{{1\!\!1}}
\def\={:=}
\def\Ker{\operatorname{Ker}}
\def\:{\colon}
\def\iso{\cong}
\def\-1{^{-1}}
\def\o{\circ}
\begin{document}
\title{Harmonic Forms on Principal Bundles}
\author{Corbett Redden}
\address{Mathematics Department, SUNY, Stony Brook, NY 11794-3651, USA}
\curraddr{Michigan State University, East Lansing, MI 48824}
\email{redden@math.msu.edu}

\subjclass[2000]{58A14, 58J28}
\keywords{Hodge theory, adiabatic limit, Chern--Simons form, Serre spectral sequence}

\begin{abstract}
We show a relationship between Chern--Simons 1- and 3-forms and harmonic forms on a principal bundle.  Doing so requires one to consider an adiabatic limit.  For the 3-form case, assume that $G$ is simple and the corresponding Chern--Weil 4-form is exact.  Then, the Chern--Simons 3-form on the princpal bundle $G$-bundle, minus a canonical term from the base, is harmonic in the adiabatic limit.
\end{abstract}

\maketitle

\section{Introduction}
In general, explicitly computing the harmonic forms on a Riemannian manifold is a difficult problem.  However, if one has some symmetries or some extra information, the problem becomes more manageable.  We notice a few interesting things in this paper.  First, if one places a very natural metric on a principal bundle, the harmonic forms have a nice characterization in an adiabatic scaling limit.  In fact, the Chern--Simons 1-form and 3-form naturally arise in this context.  Second, to show that a form is harmonic in the adiabatic limit, one only needs to show that the Laplacian of it tends towards 0 at a fast enough rate in the scaling limit.  One does not need to characterize the harmonic forms on the nose for any particular scaling.

We now proceed to say this more concretely.  All manifolds in this paper are connected, compact, oriented, and without boundary.  Fix a connected compact semisimple Lie group $G$.  Let $P\overset{\pi}\to M$ be a principal $G$-bundle with conection $\Theta$ over the Riemannian manifold $(M,g_M)$.  Choosing some bi-invariant metric $g_G$ on $G$, we use the connection $\Theta$ and base metric $g_M$ to contruct a local product metric $g_P$ on $P$:
\[ g_P \= \pi^* g_M \oplus g_G.\]
The closed manifold $P$ now has a Riemannian metric, and it is natural to investigate the harmonic forms with respect to this metric.

In concrete calculations, however, one observes that this space has no obvious characterization.  In fact, it depends on the choice of bi-invariant metric $g_G$.  Equivalently, the finite-dimensional space of harmonic forms varies as one globally rescales the base metric $g_M$.  For this reason, we introduce a scaling factor in one direction and take a limit.  For any $\dd>0$, define the metric on $P$
\[ g_\dd \= \dd^{-2} \pi^* g_M \oplus g_G. \]
The limit $\dd\to 0$, where the volume of the base is very large with respect to the volume of the fiber, is known as the adiabatic limit and appears in a number of constructions.  Of particular relevance to our situation is the work of Mazzeo--Melrose \cite{MM90}, Dai \cite{Dai91}, and Forman \cite{For95}, which considers the Hodge Laplacian on forms in an adiabatic limit.  While the metric $g_\dd$ and corresponding Laplacian $\Delta_{g_\dd}$ become singular at $\dd=0$, the space of harmonic $k$-forms $\Ker \Delta^k_{g_\dd}$ smoothly extends to $\dd=0$, as proven in all three of the above papers.  We therefore define
\[ \cH^k(P) \= \lim_{\dd\to 0} \Ker \Delta^k_{g_\dd} \subset \Omega^k(P),\]
and note that $\cH^k(P) \iso H^k(P;\R)$ canonically.

To calculate this, we use an interpretation of $\cH^k(P)$ in terms of the Leray--Serre spectral sequence for the fiber bundle $G \into P \to M$.  While this relationship was obtained in \cite{MM90}, \cite{Dai91}, \cite{For95}, it is rather subtle and not commonly known, so we give a brief recount of \cite{For95} in Section \ref{section:ss}.  The key idea is that we do not have to construct a $\dd$-family of $g_\dd$-harmonic forms; instead, we only need to show that a given form is harmonic up to a low-order power of $\dd$.  The required power of $\dd$ is related to when the spectral sequence collapses.  For example, if an $SU(n)$-bundle satisfies $c_2(P)=  0\in H^4(M;\R)$, then the spectral sequence calculating $H^3(P;\R)$ collapses at $N=2$.  That $N=2$, and not higher, makes our calculations possible.

By definition, a principal $G$-bundle has a free right $G$-action, and the metrics $g_\dd$ are all invariant under this action.  This implies that the harmonic forms on $P$ will be contained in the subspace $\Omega^*(P)^G$ of right-invariant forms.  In Section \ref{section:bigradedcomplex}, we give an explicit description of this as a bigraded cochain complex in accordance with the discussion from Section \ref{section:ss}.  We see that it has a very natural description as $\Omega^{i,j}(P)^G \iso \Omega^i(M;\Lambda^j \fg_P^*)$, where $\fg_P$ is the Adjoint bundle associated to $P$.  The exterior derivative decomposes into a Lie algebra derivative $d_\fg$, a covariant connection $d_\nabla$, and a curvature term $\iota_\Omega$.

In Section \ref{section:harmonicforms}, we perform the actual calculations.  The notation is that $\Theta$ is a connection with curvature $\Omega$, and the harmonic forms on $M$ are $\cH^i(M)$.  To begin, in Theorem \ref{theorem:1dim} we calculate the harmonic 1-forms on a $G$-bundle for any connected compact $G$.  For $G=U(n)$, this is shown in Corollary \ref{cor:unitary1dim} to be
\[ \cH^1(P) = \begin{cases} \pi^* \cH^1(M) &\text{if } c_1(P) \neq 0 \in H^2(M;\R)\\
\pi^*\cH^1(M) \oplus \R[\csu - \pi^*h] &\text{if } c_1(P) = 0 \in H^2(M;\R)
\end{cases} \]
where $h \in \Omega^1(M)$ is the unique form such that $dh = \frac{i}{2\pi}\tr(\Omega) = c_1(P,\Theta)$ is the first Chern form and $h \in d^*\Omega^2(M)$.  The cohomology class $c_1(P)$ is the first Chern class and $\csu$ is the Chern--Simons 1-form.  We then see in Proposition \ref{prop:baseharm} that when $G$ is semisimple, the adiabatic-harmonic 1 and 2-forms are
\[ \cH^1(P) = \pi^* \cH^1(M), \qquad  \cH^2(P) = \pi^* \cH^2(M).\]
Finally, Theorem \ref{thm:harmonic3forms} calculates the adiabatic-harmonic 3-forms when $G$ is simple.  We denote the Chern--Simons 3-form by $\alpha(\Theta)\in \Omega^3(P)$, and the Chern--Weil 4-form by $\cwfour \in \Omega^4(M)$.  For $G=SU(n)$ with $n\geq 2$, the harmonic forms in the adiabatic limit are 
\[ \cH^3(P) = \begin{cases}\pi^* \cH^3(M) & c_2(P)\neq 0 \in H^4(M;\R) \\
\pi^* \cH^3(M)  \oplus \R[\csthree - \pi^*h]  & c_2(P)=0 \in H^4(M;\R) \end{cases} \]
where $h \in \Omega^3(M)$ is the unique form such that $dh = \cwfour = c_2(P,\Theta)$ and $h \in d^*\Omega^4(M)$.  In particular, the Chern--Simons form $\alpha(\Theta)$ is in $\cH^3(P)$ precisely when the second Chern form $\cwfour$ is identically 0.  When $n=3$ or $n \geq 5$, the above statement also holds with $SU(n)$ replaced by $Spin(n)$ and $c_2$ replaced by $\phalf$.

We briefly discuss one use for these results which appears in \cite{Redden09}.  First consider the subset of forms with \textit{integral} periods and define $\cH^3_\Z(P) \subset \cH^3(P)$ as the image of $H^3(P;\Z) \to H^3(P;\R) \overset{\iso}\to \cH^3(P)$.  In the situation where $G=Spin(n)$ with $n \geq 5$, there is standard way to choose $\langle \cdot, \cdot \rangle$ so that $\alpha(\Theta)$ restricts to the generator of $\cH^3_\Z(Spin(n)) \iso \Z$.  It follows from Theorem \ref{thm:harmonic3forms} that if $\phalf(P)=0\in H^4(M;\Z)$, then
\[ \cH^3_\Z(P) = \pi^* \cH_\Z^3(M) \oplus \Z[\csthree - \pi^*H] ,\]
where $H$ is unique up to the addition of an element from $\cH_\Z^3(M)$.  The form $H \in \Omega^3(M)$ satisfies $dH=\cwfour$ and $d^*H=0$, but in general it has a non-trivial harmonic component due to the fact that $\alpha(\Theta)-\pi^*h$ will not have integral periods.  The non-integrality of $\alpha(\Theta)-\pi^*h$ is precisely because, modulo $\Z$, this inegrates to give Chern--Simons numbers on 3-cycles, which are often non-zero \cite{CS74}.

In \cite{Redden09}, we discuss the notion of ``string structures" on a principal bundle and see that a string structure picks out a canonical cohomology class $\cS \in H^3(P;\Z)$ that restricts fiberwise to the standard generator of $H^3(Spin(n);\Z)$.  By the results of this paper, the canonical differential form $[\cS]\in \cH^3_\Z(P)$ representing $\cS$ will be
\[ [\cS] = \alpha(\Theta) - \pi^*H \in \Omega^3(P),\]
thus specifying a \textit{canonical} $H=H_{g_M, \Theta, \cS}$.  Choosing a string structure can be thought of as choosing an $H$ such that $\alpha(\Theta)-\pi^*H \in \cH^3_\Z(P).$  An important consequence is that the form $H$ lifts the values of the associated differential character from $\R/\Z$ to $\R$.

Finally, the author would like to thank Stephan Stolz for all of his encouragement and insight.  He is also grateful to Brian Hall, Liviu Nicolaescu, and Bruce Williams for numerous helpful comments and discussions.

\section{Hodge Theory and Spectral Sequences}\label{section:ss}

The idea that the harmonic forms on a fiber bundle are related to a spectral sequence was developed in \cite{MM90} and \cite{Dai91} and expanded on in \cite{For95}.  In particular, the spectral sequence structure was described in \cite{Dai91} and made even more explicit  in \cite{For95}.  In the following, we summarize Forman's treatment to make this paper self-contained.  However, the reader should be warned that we use the bigrading notation (horizontal, vertical), which is opposite that of Forman.  Before getting to the adiabatic spectral sequence, we quickly review Hodge cohomology and spectral sequences.

\subsection{Hodge Theory}Consider a Riemannian manifold $(M,g_M)$ that is compact with no boundary.  The operator $d$ on differential forms naturally gives rise to de Rham cohomology.  However, a de Rham cohomology class only determines an equivalence class of forms representing the cohomology class.  Choosing a Riemannian metric and orientation produces the Hodge star $*: \Lambda^k TM^* \to \Lambda^{n-k}TM^*$, the adjoint operator $d^* = (-1)^{n(k+1)+1}*d*$, and the Hodge Laplacian
\[ \Delta^k \= (d+d^*)^2 = dd^* + d^* d: \Omega^k(M) \to \Omega^k(M).\]
Forms in the kernel of $\Delta$ are called harmonic, and we use the notation
\[ \cH^k(M) \= \Ker \Delta^k  = \Ker d \cap \Ker d^* \subset \Omega^k(M).\]
Elements in $\cH^k(M)$ are called harmonic, and we refer to $\cH^k(M)$ as the Hodge cohomology of the complex $(\Omega^*(M), d, d^*)$.  The main theorem of Hodge theory is the orthogonal decomposition of forms
\begin{equation}\label{eq:hodgedecomp}
\Omega^k(M) = d\Omega^{k-1} \oplus d^* \Omega^{k+1} \oplus \cH^k(M).
\end{equation}
In particular, the finite-dimensional vector space $\cH^k(M)$ of harmonic forms is \textit{canonically} isomorphic to the de Rham cohomology $H^k(M;\R)$.  Furthermore, there are natural isomorphisms between exact and coexact forms
\begin{equation}\label{eq:ddstariso} \xymatrix{ &d^*\Omega^{k+1} \ar@/_/[r]_{d} &d\Omega^k(M). \ar@/_/[l]_{d^*} } \end{equation}

For generic Riemannian manifolds, explicitly describing harmonic forms is extremely difficult.  However, on a compact connected Lie group with bi-invariant metric, the situation is much easier.  The group acts on itself by isometries, and it follows that any harmonic form must be invariant under the left and right group actions.  In fact, it can be shown that any bi-invariant form must be closed, leading to Proposition \ref{prop:liealgharmonic}.  Restricting to the subcomplex of left-invariant forms, we obtain the standard complex used in Lie algebra cohomology.  The adjoint $d^*$ maps left-invariant forms to left-invariant forms, and so descends to an adjoint at the Lie algebra level:
\begin{equation}\label{eq:liealgcoh} \xymatrix{ 0 \ar[r] &\R \ar[r]^0\ar@/_1pc/[l]  &\fg^* \ar[r]^{d_\fg}\ar@/_1pc/[l]_{0} &\Lambda^2\fg^* \ar[r]^{d_\fg} \ar@/_1pc/[l]_{d_\fg^*} &\cdots \ar[r]^{d_\fg} \ar@/_1pc/[l]_{d_\fg^*} &\Lambda^{n-1}\fg^* \ar[r]^{0} \ar@/_1pc/[l]_{d_\fg^*} &\Lambda^n \fg^* \ar[r] \ar@/_1pc/[l]_0 &0.\ar@/_1pc/[l]. }\end{equation}
The harmonic forms $\cH^k(\fg) = \Ker (d_\fg d^*_\fg + d^*_\fg d_\fg)$ are naturally isomorphic to $\cH^k(G)$ and are described by the following standard result (see, for instance, 7.4 of \cite{MR1435504}).
\begin{prop}\label{prop:liealgharmonic}For connected compact Lie groups with bi-invariant metric, the harmonic forms are exactly the bi-invariant forms.  Equivalently, for Lie algebras with $Ad$-invariant metric, the harmonic forms in \eqref{eq:liealgcoh} are exactly the $Ad$-invariant forms.
\end{prop}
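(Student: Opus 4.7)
The plan is to prove both directions of the equivalence, working primarily at the Lie algebra level and then transferring back to the group. The isomorphism $\omega \mapsto \omega_e$ identifies left-invariant forms on $G$ with $\Lambda^k \fg^*$; under this identification right-invariance corresponds to $\mathrm{Ad}$-invariance, the de Rham differential corresponds to $d_\fg$, and the bi-invariant metric makes this identification an isometry of inner product spaces, so the two formulations of the proposition are equivalent.

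For the first direction (bi-invariant $\Rightarrow$ harmonic), I would show closedness and coclosedness separately. For closedness, I translate $\omega$ to its value $\omega_e \in \Lambda^k \fg^*$ and use the Chevalley--Eilenberg formula
\[ (d_\fg \omega_e)(X_0,\dots,X_k) = \sum_{i<j}(-1)^{i+j}\omega_e([X_i,X_j],X_0,\dots,\widehat{X_i},\dots,\widehat{X_j},\dots,X_k). \]
Differentiating the $\mathrm{Ad}$-invariance relation $\mathrm{Ad}(\exp tX)^*\omega_e = \omega_e$ at $t=0$ yields $\sum_i \omega_e(X_0,\dots,[X,X_i],\dots,X_{k-1})=0$ for every $X$, which is precisely the identity needed to make the right-hand side of the Chevalley--Eilenberg formula vanish. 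Hence $d_\fg\omega_e = 0$, so $d\omega = 0$. For coclosedness, the Hodge star commutes with every isometry, and since $L_g$ and $R_g$ are isometries with respect to the bi-invariant metric, $*\omega$ is again bi-invariant. Applying the previous step to $*\omega$ gives $d(*\omega)=0$, hence $d^*\omega = \pm{*d*}\omega = 0$. Therefore $\omega$ is harmonic.

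For the converse (harmonic $\Rightarrow$ bi-invariant), I use the symmetry of the problem. Since $L_g$ and $R_g$ are isometries, they commute with $\Delta$ and thus preserve $\cH^k(G)$. For fixed $g$, because $G$ is connected I can join $g$ to the identity by a smooth path, giving a smooth isotopy from $L_g$ to $\mathrm{id}_G$; pullbacks along smoothly isotopic maps induce identical maps on de Rham cohomology, so $L_g^*\omega$ and $\omega$ represent the same class in $H^k(G;\R)$. By the Hodge decomposition \eqref{eq:hodgedecomp}, each class has a unique harmonic representative, and both $L_g^*\omega$ and $\omega$ are harmonic; therefore $L_g^*\omega = \omega$. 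The same argument with $R_g$ in place of $L_g$ yields right-invariance, completing the proof.

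There is no serious obstacle here; the substantive ingredient is the algebraic identity that $\mathrm{Ad}$-invariance kills the Chevalley--Eilenberg differential, and the rest is formal manipulation of Hodge theory together with the connectedness of $G$. The main thing to be careful about is distinguishing the two notions of invariance (left versus right, equivalently translation versus $\mathrm{Ad}$) and verifying that the Hodge star preserves the relevant invariance, but both follow immediately from the bi-invariance of the metric.
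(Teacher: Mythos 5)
Your proposal is correct and follows essentially the route the paper itself sketches and cites (7.4 of \cite{MR1435504}): harmonic $\Rightarrow$ bi-invariant via isometries, homotopy to the identity, and uniqueness of harmonic representatives in the Hodge decomposition; bi-invariant $\Rightarrow$ closed, with coclosedness from the Hodge star preserving bi-invariance. The only soft spot is the claim that the infinitesimal $Ad$-invariance identity is ``precisely'' what kills the Chevalley--Eilenberg sum --- one actually needs to combine several instances of that identity (or use the classical inversion-map argument $g\mapsto g^{-1}$) to conclude $d_\fg\omega_e=0$, but this is the standard lemma that invariant cochains are cocycles, so the argument stands.
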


\subsection{Leray--Serre spectral sequence}\label{subsection:SerreSS}
The Leray--Serre cohomology spectral sequence for a fibration $F \into P \to M$ consists of: 
\begin{itemiz}
\item a sequence of ``pages" or cochain complexes $E^p_0, E^p_1, E^p_2, \cdots, E^p_\infty$, equipped with a bigrading $E^p_K = \displaystyle\bigoplus_{i+j=p} E^{i,j}_K$,
\item differentials $d_K: E^p_K \to E^{p+1}_K$ such that $H^p(E^*_K, d_K) = E^p_{K+1}$.  With respect to the bigrading, $d_K: E^{i,j}_K \to E^{i+K, j-K+1}_K$,
\item $E^{i,j}_2 \iso H^i(M; H^j(F;H)),$ where $H$ is an abelian group (if $\pi_1(M)$ acts trivially on $P$),
\item $E^p_\infty$ gives $H^p(M;H)$ in the sense that $E^{n, p-n}_\infty $ is isomorphic to the quotient $F^p_n / F^p_{n+1}$ of a filtration
$0 \subset F^p_p \subset \cdots \subset F^p_0 = H^p(P;H)$.
\end{itemiz}

In other words, it relates the cohomology of the total space of a fibration with the cohomology of its base and fiber.  This allows for powerful calculations using formal methods.  The primary example we deal with is the case where $P$ is a principal bundle over a manifold $M$.  We note that if $G$ is connected, then $\pi_1(BG)=0$.  Since any $G$-bundle is isomorphic to a pullback of the universal bundle $EG \to BG$, $\pi_1(M)$ acts trivially on the fibers of $P$.  For examples of spectral sequence calculations, see Section \ref{section:harmonicforms}.

\subsection{Adiabatic spectral sequence}
While the results of \cite{For95} apply to more general situations, we restrict to a specific geometric hypothesis to simplify matters.  Suppose that $F \into P \overset{\pi}\to M$ is a fiber bundle of compact manifolds such that $\pi$ is a Riemannian submersion; i.e. $(P,g_P)$ and $(M,g_M)$ are Riemannian manifolds, the map $\pi$ is a Riemannian submersion, and the fibers $\pi^{-1}(x) \subset P$ are Riemannian submanifolds isometric to a fixed Riemannian manifold $(F,g_F)$.  Thinking of the fibers $F$ as the vertical direction and the base $M$ as the horizontal direction, the tangent bundle $TP$ decomposes as
\[ TP = \THP \oplus \TVP. \]
Here, \[ \TVP \= \Ker \pi_* \subset TP, \text{ and } \THP \= TP \ominus \TVP \subset TP\]
are distributions of vertical and horizontal vectors respectively ($\THP$ is defined by taking the orthogonal complement of $\TVP$ using the metric $g_P$).  The metric on $P$ is then the direct sum of metrics on the horizontal and vertical distributions:
\[ g_P = g_M \oplus g_F.\]
Define the 1-parameter family of metrics $g_\delta$, where $\delta >0,$ by
\begin{equation}\label{eq:gdelta} g_\delta \= \delta^{-2}g_M \oplus g_F. \end{equation}
The limit $\delta \to 0$ is called the \textit{adiabatic limit} and can be thought of as making the base very large relative to the fibers.

The orthogonal decomposition of $TP$ induces a bigrading on the differential forms
\[ \Omega^p(P) = \bigoplus_{i+j=p} \Omega^{i,j}(P), \quad \Omega^{i,j}(P) = C^\infty(P, \Lambda^i \THP^* \otimes \Lambda^j \TVP^*); \]
The exterior derivative $d$ and its adjoint $d^*$, under the fixed metric $g_P$, decompose as
\[ d = d^{0,1} + d^{1,0} + d^{2,-1}, \quad d^{a,b}:\Omega^{i,j}(P) \to \Omega^{i+a,j+b}(P), \]
\[ d^* = d^{0,1*} +  d^{1,0*} + d^{2,-1*}, \quad d^{a,b*}: \Omega^{i,j}(P) \to \Omega^{i-a, j-b}(P). \]
The component $d^{-1,2}=0$ due to the integrability of $\TVP$.  To effectively deal with $g_\delta$, introduce the isometry for any $\dd>0$
\begin{align*}
\rho_\dd \: (\Omega^{i,j}(P), g_\dd) &\to (\Omega^{i,j}(P), g_P) \\
\phi &\mapsto \dd^i \phi. \\
\end{align*}
Conjugating by $\rho_\dd$ gives rise to a 1-parameter family of operators $d_\dd, d^*_\dd,$ and $L_{g_\dd}$ on the fixed inner product space $(\Omega^*(P), g_P)$.  These evidently take the form
\begin{equation}
\label{eq:ddelta}\begin{split}d_\dd \={}& \rho_\dd d \rho_\dd \-1 = d^{0,1} +  \dd d^{1,0} + \dd^2 d^{2,-1},\\
d^*_\dd \={}& \rho_\dd d^{*_{g_\dd}} \rho_\dd \-1 = d^{0,1*} +  \dd d^{1,0*} + \dd^2 d^{2,-1*},\\
L_{g_\dd} \={}& \rho_\dd \Delta_{g_\dd} \rho_\dd \-1 = d_\dd d^*_\dd + d^*_\dd d_\dd.
\end{split}\end{equation}

In addition to fixing the inner product, the isometry $\rho_\dd$ produces the above factors of $\dd$ which naturally give the spectral sequence structure.  We first define the terms $E^{i,j}_K$, and then we define the differentials and explain why this is a spectral sequence.

\begin{defn}\label{defn:epages}
\begin{align*}E^{i,j}_K \= \{ &\omega \in \Omega^{i,j}(P) \>|\> \exists \> \omega_1, \ldots, \omega_l \in \Omega^{i+j}(P)\> \text{with} \\
&d_\dd (\omega + \dd \omega_1 + \cdots + \dd^l \omega_l) \in \dd^K\Omega^{i+j+1}(P)[\dd] \\
&d^*_\dd (\omega + \dd \omega_1 + \cdots + \dd^l \omega_l) \in \dd^K\Omega^{i+j-1}(P)[\dd] \} \\
E^{i,j}_\infty \= {}{}&\bigcap_{K} E^{i,j}_K
\end{align*}\end{defn}
While the above definition seems formal, there is a nice geometric interpretation.  The polynomial $\omega + \dd \omega_1 + \cdots + \dd^l\omega_l$ should be thought of as the Taylor approximation at $\dd=0$ of a function
\[ \widetilde{\omega} \in C^\infty([0,1], \Omega^{i+j}(P)),\]
and elements $\omega \in E^{i,j}_K$ as boundary values of sections $\widetilde{\omega}$ such that as $\delta\to 0$,
\[ d_\dd \widetilde{\omega} = 0 + O(\dd^K);\> d^*_\dd \widetilde{\omega} = 0 + O(\dd^K). \]

By definition, $\Omega^{i,j}(P) = E^{i,j}_0 \cdots \supseteq E^{i,j}_K \supseteq E^{i,j}_{K+1} \supseteq \cdots \supseteq E^{i,j}_\infty$.  To define the differentials, first let \[ \pi_K:\Omega^*(P)\to E_K \subset \Omega^*(P)\] denote the orthogonal projection using the metric $g_P$.  To any $\omega \in E_K$,  there exists a non-unique polynomial $\omega_\dd = \omega + \dd \omega_1 +\cdots + \dd^l\omega_l$ satisfying Definition \ref{defn:epages}.  Define
\[  d_K \omega_\dd \= \lim_{\dd \to 0} \dd^{-K} d_\dd \omega_\dd = \lim_{\dd \to 0} \dd^{-K} d_\dd (\omega + \dd\omega_1 + \dd^2\omega_2 + \cdots) \in \Omega^*(P).\]
Though this depends on the choice of polynomial $\omega_\delta$,
\[ \pi_K d_K : E_K \to E_K \] is well-defined.  The same procedure defines \[ \pi_K d^*_K : E_K \to E_K \]  and the second order operator
\[\vartriangle_K \= \left(\pi_K d_K\right)  \left(\pi_K d^*_K \right) + \left(\pi_K d^*_K \right) \left(\pi_K d_K \right).\]

\begin{theorem}[Theorem 2.5 \cite{For95}] $\>$

\begin{enum}
\item $(\pi_K d_K )^2 = (\pi_K d^*_K )^2 = 0$.
\item $\Ker \vartriangle_K = \Ker (\pi_K d_k )\cap \Ker (\pi_K d^*_K )= E_{K+1}$.
\item $\pi_Kd_K : E_K^{i,j} \to E_K^{i+K, j-K +1}$.
\end{enum}\end{theorem}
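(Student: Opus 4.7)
The plan is to prove (3), (1), and (2) in that order, leveraging the identities $d_\dd^2 = 0 = (d^*_\dd)^2$ together with the $g_P$-adjointness of $d^*_\dd$ and $d_\dd$ at every $\dd > 0$. For (3), I would exploit that the three pieces $d^{0,1}, d^{1,0}, d^{2,-1}$ in \eqref{eq:ddelta} shift bidegree by $(0,1), (1,0), (2,-1)$ while carrying powers $\dd^0, \dd^1, \dd^2$. The first step is to show by induction on $m$ that any $\omega \in E_K^{i,j}$ admits a polynomial extension with each $\omega_m$ in pure bidegree $(i+m, j-m)$: once $\omega_0, \ldots, \omega_{m-1}$ are so normalized, the three summands in $d^{0,1}\omega_m + d^{1,0}\omega_{m-1} + d^{2,-1}\omega_{m-2}$ all sit in pure bidegree $(i+m, j-m+1)$, so replacing any candidate $\omega_m$ by its bidegree-$(i+m, j-m)$ component preserves the order-$\dd^m$ vanishing. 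With such a normalized extension, $\dd^{-K} d_\dd \omega_\dd|_{\dd = 0}$ has pure bidegree $(i+K, j-K+1)$; since $\pi_K$ is an orthogonal projection onto a bigraded subspace, it preserves bigrading, yielding (3).

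For (1), given $\omega \in E_K$ with extension $\omega_\dd$, I would factor $d_\dd \omega_\dd = \dd^K \eta_\dd$ and use $d_\dd^2 = 0$ to conclude $d_\dd \eta_\dd \equiv 0$. The analogous factorization $d^*_\dd \omega_\dd = \dd^K \xi_\dd$ combined with $(d^*_\dd)^2 = 0$ gives $d^*_\dd \xi_\dd \equiv 0$. From these two identities one verifies that $\pi_K \eta_0 = \pi_K d_K \omega$ admits a polynomial extension in $E_K$ whose leading term under $\dd^{-K} d_\dd$ lands in $E_K^\perp$, yielding $(\pi_K d_K)^2 \omega = 0$. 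Swapping $d$ and $d^*$ throughout produces $(\pi_K d^*_K)^2 = 0$.

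For (2), the first equality should follow from a standard Hodge-type argument on the finite-dimensional inner-product space $E_K$: $\pi_K$ is a self-adjoint projection and $d_\dd, d^*_\dd$ are $g_P$-adjoint at every positive $\dd$, a property that passes to the limit, so $\pi_K d_K$ and $\pi_K d^*_K$ are mutually adjoint on $E_K$ and $\langle \vartriangle_K \omega, \omega\rangle = \|\pi_K d_K \omega\|^2 + \|\pi_K d^*_K \omega\|^2$. The inclusion $E_{K+1} \subset \Ker(\pi_K d_K) \cap \Ker(\pi_K d^*_K)$ is immediate, since an extension that pushes $d_\dd, d^*_\dd$ to order $\dd^{K+1}$ directly kills $d_K \omega$ and $d^*_K \omega$. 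I expect the reverse inclusion to be the main obstacle: given $\omega \in E_K$ with $\pi_K d_K \omega = \pi_K d^*_K \omega = 0$, I would attempt to build a correction $\sigma_\dd$ starting at order $\dd^K$ so that $\omega_\dd + \sigma_\dd$ extends $\omega$ with $d_\dd$ and $d^*_\dd$ both vanishing to order $\dd^{K+1}$. The obstructions at each successive order will be the off-$E_K$ bidegree components of $(1-\pi_K) d_K \omega$ and $(1-\pi_K) d^*_K \omega$, and solving them will require iterated Hodge-type decompositions on the earlier pages $E_0, \ldots, E_{K-1}$, supplied by induction on $K$; carefully tracking which bidegree obstructions vanish precisely because $\omega$ lies in $E_K$ (rather than a lower page) will be the recursive heart of the argument.
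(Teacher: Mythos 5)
This statement is quoted from Forman (Theorem 2.5 of \cite{For95}) and the paper supplies no proof of it, so your proposal has to stand on its own; as written it does not. The decisive gap is in your argument for (3): the claim that every $\omega\in E_K^{i,j}$ admits an extension with each $\omega_m$ in pure bidegree $(i+m,j-m)$ is false, and the inductive step fails precisely at the coadjoint conditions. With $\omega_0,\dots,\omega_{m-1}$ already pure, the coefficient of $\dd^m$ in $d^*_\dd\omega_\dd$ is $d^{0,1*}\omega_m+d^{1,0*}\omega_{m-1}+d^{2,-1*}\omega_{m-2}$, and the last two terms lie in bidegrees $(i+m-2,\,j-m+1)$ and $(i+m-4,\,j-m+3)$, which $d^{0,1*}$ applied to the pure component of $\omega_m$ (landing in $(i+m,\,j-m-1)$) can never reach. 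Cancelling $d^{1,0*}\omega_{m-1}$ therefore generally requires a component of $\omega_m$ in bidegree $(i+m-2,\,j-m+2)$, so replacing $\omega_m$ by its pure part destroys, rather than preserves, the order-$\dd^m$ vanishing of $d^*_\dd\omega_\dd$. This already happens at $m=1$: membership in $E_2$ only forces $d^{1,0*}\omega_0$ to be $d^{0,1*}$-exact, not zero. The paper's own key computation exhibits the phenomenon: in the proof of Theorem \ref{thm:harmonic3forms} the order-$\dd^3$ term of the extension of $\alpha^{0,3}$ must contain $\beta^{1,2}$ of bidegree $(1,2)$, supplied by Lemma \ref{lem:betaterm} exactly to cancel $d^*_\nabla\alpha^{2,1}$; a term of bidegree $(3,0)$ alone cannot do this. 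So the bidegree shift in (3) cannot be obtained by your normalization; it needs an argument about which bidegree components of the order-$\dd^K$ obstruction survive the projection $\pi_K$.

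The other two parts are also not established. In (1), from $d_\dd\omega_\dd=\dd^K\eta_\dd$ and $d_\dd^2=0$ you do get $d_\dd\eta_\dd\equiv 0$, but you have no control on $d^*_\dd\eta_\dd$, so $\eta_0$ need not lie in $E_K$ and $\eta_\dd$ is not an admissible extension of $\pi_K d_K\omega$ for computing the next application of $d_K$; the whole difficulty is the discrepancy $(1-\pi_K)\eta_0$ and the fact that $d_K$ must be evaluated on an extension satisfying \emph{both} the $d_\dd$ and $d^*_\dd$ conditions of Definition \ref{defn:epages}, and this is hidden in your phrase ``one verifies.'' In (2), the identity $\Ker\vartriangle_K=\Ker(\pi_K d_K)\cap\Ker(\pi_K d^*_K)$ and the inclusion $E_{K+1}\subset\Ker\vartriangle_K$ are fine granted the mutual adjointness you assert, but the reverse inclusion $\Ker\vartriangle_K\subset E_{K+1}$ --- which you correctly identify as the heart of the matter --- is only a plan: one must produce a single correction making $d_\dd$ and $d^*_\dd$ simultaneously vanish to order $\dd^{K+1}$, and that construction (together with the well-definedness and adjointness inputs you assume) is the technical core of Forman's proof. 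As it stands, none of the three assertions is proved.
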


Therefore, the complex $\{E^{i,j}_K,\pi_K d_K\}$ is a bi-graded spectral sequence.  However, instead of taking the ordinary cohomology and dealing with equivalences classes of cocycles, $E^\bullet_{K+1}$ is obtained as the Hodge cohomology of the complex $E^\bullet_{K}$.  The cochains of each subsequent page in the spectral sequence are still represented by differential forms.  By Corollary 4.4 in \cite{For95}, the spectral sequence is isomorphic to the standard Leray--Serre spectral sequence associated to the fibration $F \into P\overset{\pi}\to M$.  Let $N = N(p)$ denote the page where the portion of the spectral sequence calculating $H^p(P;\R)$ collapses; i.e. N is such that
\[ E^p_{N(p)-1} \neq E^p_{N(p)} = \cdots = E^p_\infty. \]

\begin{prop}\label{prop:basis}
For $\omega \in E^p_\infty$, there exists a unique formal power series
\[ \omega_\dd = \omega + \dd \omega_1 + \dd^2 \omega_2 + \cdots \in \Omega^p(P) \bd \]
such that $\omega_i \in E^\perp_\infty$ for all $i \geq 1$ and formally $d_\dd \omega_\dd = d^*_\dd \omega_\dd = 0$.

Furthermore, for any $\omega_\dd \in \Omega^p(P)[\dd]$ satisfying
\[\omega_l \in E^\perp_\infty \;(1\leq l \leq L) \text{ and } d_\dd \omega_\dd, d^*_\dd \omega_\dd \in \dd^{N+L}\Omega^*(P)[\dd], \]  
the terms $\omega_l$ for $l \leq L$ are the terms in the unique power series above.
\begin{proof}
The first part of the proposition follows from the second by considering arbitrarily large $L$.  To prove the second part, suppose $\omega + \dd \omega_1 + \cdots$ and $\omega + \dd \phi_1 + \cdots$ satisfy the assumptions above.  Then, we have that 
\begin{align*}
d_\dd (\omega + \dd \omega_1 + \dd^2 \omega_2 + \cdots) - d_\dd (\omega + \dd \phi_1 + \dd^2 \phi_2 + \cdots)& \\
=d_\dd (\dd (\omega_1 - \phi_1) + \dd^2 (\omega_2 - \phi_2) + \cdots)& \in \dd^{N+L}\Omega^{p+1}(P)[\dd],\\
d^*_\dd (\omega + \dd \omega_1 + \dd^2 \omega_2 + \cdots) - d^*_\dd (\omega + \dd \phi_1 + \dd^2 \phi_2 + \cdots)& \\
= d^*_\dd (\dd (\omega_1 - \phi_1) + \dd^2 (\omega_2 - \phi_2) + \cdots)& \in \dd^{N+L}\Omega^{p-1}(P)[\dd]
\end{align*}
Hence, $\omega_1 - \phi_1 \in E^p_{N+L-1}= E^p_\infty$, but we know that $\omega_1 - \phi_1 \in (E^{p}_\infty)^\perp$.  Therefore, $\omega_1 = \phi_1$, and we continue inductively to show uniqueness of the higher order terms for $l \leq L$.
\end{proof} \end{prop}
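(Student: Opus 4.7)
The plan is to prove the second (``furthermore'') clause first and then deduce the existence and uniqueness of the formal series by letting $L \to \infty$. The uniqueness clause is the cleaner half and relies on the collapse of the spectral sequence at page $N$ together with the orthogonality condition, while the first clause needs an additional orthogonalization step whose bookkeeping is the main technical cost.

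For the uniqueness clause, I would take two polynomials $\omega + \dd\omega_1 + \cdots$ and $\omega + \dd\phi_1 + \cdots$ satisfying the hypotheses, let $k \geq 1$ be the smallest index at which they differ, and factor $\dd^k$ out of their difference. The result is a polynomial with leading coefficient $\omega_k - \phi_k$ whose images under $d_\dd$ and $d_\dd^*$ lie in $\dd^{N+L-k}\Omega^*[\dd]$. Definition \ref{defn:epages} then places $\omega_k - \phi_k$ in $E^p_{N+L-k}$. Since $1 \leq k \leq L$ one has $N+L-k \geq N$, and the collapse of the spectral sequence in degree $p$ at page $N$ gives $E^p_{N+L-k} = E^p_\infty$. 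Combined with $\omega_k - \phi_k \in (E^p_\infty)^\perp$ this forces $\omega_k = \phi_k$, contradicting the choice of $k$, so $\omega_l = \phi_l$ for all $l \leq L$.

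For existence, I would proceed by induction on $L$: at stage $L$, produce $\omega_1, \ldots, \omega_L \in (E^p_\infty)^\perp$ so that $d_\dd$ and $d_\dd^*$ of the truncated polynomial lie in $\dd^{N+L}\Omega^*[\dd]$. Since $\omega \in E^p_\infty \subseteq E^p_{N+L}$, Definition \ref{defn:epages} yields some polynomial extension, a priori without the orthogonality condition; decomposing each coefficient $\tilde\omega_l = \eta_l + \xi_l$ with $\eta_l \in E^p_\infty$ and $\xi_l \in (E^p_\infty)^\perp$, I would subtract off the $\dd^l$-shift of the polynomial series built for $\eta_l$ by the inductive hypothesis at level $L-l$, which replaces $\tilde\omega_l$ by $\xi_l$. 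The already-established uniqueness then pins the resulting coefficients for $l \leq L-1$ to those found at the previous inductive stage, so the sequence stabilizes and assembles into a unique formal series $\omega_\dd \in \Omega^p(P)\bd$ with $d_\dd \omega_\dd = d_\dd^* \omega_\dd = 0$ formally.

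The main obstacle is the bookkeeping in this orthogonalization: one must verify that the correction built from $\eta_l$ vanishes to order $\dd^{N+L-l}$ under $d_\dd$ and $d_\dd^*$, so that after multiplication by $\dd^l$ the net contribution stays in $\dd^{N+L}\Omega^*[\dd]$ and the high-order vanishing is preserved. This is precisely the inductive hypothesis applied at level $L-l < L$, so the induction closes and the proposition follows.
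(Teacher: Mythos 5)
Your proof is correct and follows essentially the same route as the paper: the second clause is established by exactly the paper's subtraction argument (the difference of two admissible polynomials, after factoring out $\dd^k$, lies in $E^p_{N+L-k}=E^p_\infty$ while its leading coefficient is in $(E^p_\infty)^\perp$, forcing it to vanish), and the first clause follows by letting $L\to\infty$ and using this uniqueness to see the coefficients stabilize. The only difference is that you spell out the orthogonalization needed to produce, for each $L$, a polynomial with coefficients in $(E^p_\infty)^\perp$ --- a step the paper leaves implicit in ``considering arbitrarily large $L$'' --- and your inductive bookkeeping for that correction (subtracting $\dd^l$-shifts of series attached to the $E^p_\infty$-components, which vanish to order $\dd^{N+L-l}$) is sound.
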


\subsection{Relation to harmonic forms}\label{subsection:adiabaticharmonic}
We now wish to discuss the convergence of the spectral sequence in relation to the adiabatic limit of harmonic forms.  As a reminder, the Laplacians $\Delta^p_{g_\dd}$ and $L^p_{g_\dd}$ are defined for all $\dd>0$ and have finite-dimensional kernels naturally isomorphic to $H^p(P;\R)$.  Define the spaces $\Ker \Delta^p_0$ and $\Ker L^p_0$ by
\begin{equation}\begin{aligned} \Ker \Delta^p_0 \= \lim_{\dd\to 0}\Ker \Delta^p_{g_\dd} = \{ \omega \in \Omega^p(P) \>|\>& \exists \> \widetilde{\omega} \in C^\infty([0,1],\Omega^p(P)); \\
& \widetilde{\omega}(0)=\omega, \>  \Delta^p_{g_\dd}\widetilde{\omega}(\dd) = 0 \>\forall \dd > 0\} \end{aligned}\end{equation}
and
\begin{equation}\begin{aligned} \Ker L^p_0 \= \lim_{\dd\to 0}\Ker L^p_{g_\dd} = \{ \omega \in \Omega^p(P) \>|\>& \exists \> \widetilde{\omega} \in C^\infty([0,1],\Omega^p(P)); \\
& \widetilde{\omega}(0)=\omega, \>  L^p_{g_\dd}\widetilde{\omega}(\dd) = 0 \>\forall \dd > 0\} \end{aligned}\end{equation}
In other words, an element $\omega \in \Ker L^p_0$ if and only if it is the limit of a 1-parameter family of $L_{g_\dd}$-harmonic forms.  Suppose $\widetilde{\omega}(\dd)$ is such a smooth function on $[0,1]$ with values in $\Ker L_{g_\dd}$ for all $\dd > 0$.  It follows that the Taylor series at $\dd=0$, denoted $\omega_\dd \in \Omega^p(P)\bd$, is formally $L^p_{\dd}$ harmonic.  In other words,
\[ \widetilde{\omega}(\dd) \underset{\dd=0}\sim \omega_\dd = \omega + \dd \omega_1 + \dd^2 \omega_2 + \cdots \]
and
\[ L_\dd ( \omega + \dd\omega + \dd^2 \omega_2 + \cdots ) = 0.\]
This gives a map of finite-dimensional vector spaces $\Ker L^p_0 \to E^p_\infty$.  In fact, this is an equality.  Forman proves this by a careful analysis of the eigenvalues of $L_{g_\dd}$.  If the eigenvalues of $L^p_{g_{\dd}}$ are ordered, then the number of eigenvalues $\lambda_i(\dd)$ (counted with multiplicity) such that $\lambda_i(\dd) = 0 + O(\dd^{2K})$ is equal to the dimension of $E^p_K$ (Cor. 5.14 \cite{For95}).  This, combined with the knowledge of $E^p_K$ as a spectral sequence, implies that if $\lambda_i(\dd) = 0 + O(\dd^{2N})$, then $\lambda^i(\dd) = 0 + O(\dd^K)$ for all $K$, and therefore corresponds to an element of $E_\infty$.  The spectral sequence interpretation of $E^p_\infty$ implies that $\dim E^p_\infty = \dim H^p(P;\R)$, and standard Hodge theory implies that the number of 0-eigenvalues for all $\dd>0$ is the dimension of $H^p(P;\R)$.  Therefore, 
\begin{align*} \dim H^p(P;\R) = \dim E^p_\infty = \#\{ \lambda_i(\dd) \> | \> \lambda_i(\dd) = 0 + O(\dd^\infty)\} \\
\geq \# \{ \lambda_i(\dd) \>| \> \lambda_i(\dd) = 0\>\forall \>\dd >0\} = \dim H^p(P;\R). \end{align*}
If $\lambda_i(\dd)$ is an eigenvalue such that $\lambda_i(\dd)= 0+O(\dd^N)$, then $\lambda_i$ is identically 0.

Let $\Pi_{\Ker L_{g_\dd}}$ denote the orthogonal projection onto $\Ker L_{g_\dd}$.  Suppose that $\omega \in E^k_\infty$, and $\omega_\dd = \omega + \dd \omega_1 + \ldots + \dd^l \omega_l$ with $\omega_i \in E^\perp_\infty$ for $i\geq 1$ is such that both \[ d_\dd \omega_\dd = 0 + O(\dd^{N+M}),\text{ and } d^*_\dd \omega_\dd = 0 + O(\dd^{N+M}).\]
Then, Theorem 5.21 in \cite{For95} implies 
\begin{equation}\label{eq:projLharm} \left(\Pi_{\Ker L_{g_\dd}} - \Id \right) \omega_\dd = 0 + O(\dd^M).\end{equation}
Therefore, given any $\omega \in E_\infty$, there exists
\[ \widetilde{\omega}(\dd) \= \Pi_{\Ker L_{g_\dd}} \omega_\dd \in C^M([0,1], \Ker \Delta_{g_\dd}) \] such that the first $M$-terms in the Taylor expansion at the boundary coincide with the polynomial $\omega_\dd$.  Considering arbitrarily large polynomials of the form in Proposition \ref{prop:basis} implies the following theorem.

\begin{theorem}[Cor 5.18 \cite{For95}]\label{thm:L0smooth} 
The space of $L_{g_\dd}$-harmonic forms extends smoothly to $\dd=0$, and 
\[\lim_{\dd\to 0} \Ker L^p_{g_\dd} = E^p_\infty \subset \Omega^p(P).\]
\end{theorem}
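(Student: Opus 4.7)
The plan is to combine the formal algebraic construction of Proposition \ref{prop:basis} with the analytic projection estimate \eqref{eq:projLharm} and the eigenvalue-counting argument already outlined in the excerpt, splitting the proof into two set-theoretic inclusions and then a smoothness step.

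First I would establish $\Ker L_0^p \subseteq E_\infty^p$. For $\omega \in \Ker L_0^p$, by definition there is a smooth family $\widetilde{\omega}(\dd) \in \Ker L_{g_\dd}$ with $\widetilde{\omega}(0) = \omega$. Its Taylor expansion $\omega_\dd = \omega + \dd\omega_1 + \dd^2\omega_2 + \cdots$ is formally annihilated by $L_\dd = d_\dd d_\dd^* + d_\dd^* d_\dd$; pairing with $\omega_\dd$ coefficient by coefficient in the fixed $g_P$ inner product forces both $d_\dd \omega_\dd$ and $d_\dd^* \omega_\dd$ to vanish as formal power series. Matching Definition \ref{defn:epages} at every order $K$ then gives $\omega \in E_K^p$ for all $K$, hence $\omega \in E_\infty^p$.

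Next I would prove $E_\infty^p \subseteq \Ker L_0^p$. Given $\omega \in E_\infty^p$, Proposition \ref{prop:basis} produces the unique formal series $\omega_\dd = \omega + \dd\omega_1 + \dd^2\omega_2 + \cdots$ with $\omega_i \in (E_\infty^p)^\perp$ and $d_\dd \omega_\dd = d_\dd^* \omega_\dd = 0$ formally. For any integer $M$, truncate to a polynomial $\omega_\dd^{(M)}$ whose degree is large enough that $d_\dd \omega_\dd^{(M)}$ and $d_\dd^* \omega_\dd^{(M)}$ lie in $\dd^{N+M}\Omega^*(P)\bd$. The estimate \eqref{eq:projLharm} then yields
\[ \Pi_{\Ker L_{g_\dd}} \omega_\dd^{(M)} - \omega_\dd^{(M)} = O(\dd^M), \]
so $\widetilde{\omega}^{(M)}(\dd) := \Pi_{\Ker L_{g_\dd}} \omega_\dd^{(M)}$ is a $C^{M-1}$ curve in $\Ker L_{g_\dd}$ taking the value $\omega$ at $\dd = 0$. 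In particular $\omega \in \Ker L_0^p$, and the first $M-1$ Taylor coefficients of $\widetilde{\omega}^{(M)}$ are dictated by $\omega_\dd$.

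To upgrade the set-theoretic equality $\Ker L_0^p = E_\infty^p$ to a smooth extension of the kernel bundle, I would invoke the eigenvalue analysis sketched in the excerpt: the number of eigenvalues of $L_{g_\dd}^p$ vanishing to order $\dd^{2K}$ equals $\dim E_K^p$, and any eigenvalue vanishing past order $\dd^N$ vanishes identically, so $\dim \Ker L_{g_\dd} = \dim H^p(P;\R) = \dim E_\infty^p$ is constant for $\dd > 0$. Fix a basis of $E_\infty^p$ and build the families $\widetilde{\omega}^{(M)}$ above; the uniqueness clause of Proposition \ref{prop:basis} forces the truncations corresponding to different values of $M$ to agree to the common jet order, so the families stabilize to all orders at $\dd = 0$ and assemble into a genuine $C^\infty$ frame up to the boundary. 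The main obstacle is this last step: patching the $M$-approximate harmonic families into a single smooth section of the kernel bundle across the singular point $\dd = 0$ is not formal, and it is exactly here that Forman's eigenvalue gap --- the dichotomy between identically zero and bounded-below-by-some-positive-constant eigenvalues --- does the essential analytic work.
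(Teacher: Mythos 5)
Your argument is correct and follows essentially the same route as the paper's own account: the Taylor-expansion map $\Ker L^p_0 \to E^p_\infty$ for one inclusion, and for the other the unique formal series of Proposition~\ref{prop:basis} combined with the projection estimate \eqref{eq:projLharm} and the eigenvalue-counting argument, with the final passage from $C^M$ approximations to a genuinely smooth extension of the kernel resting on Forman's spectral analysis, exactly as the paper does by citing Cor.~5.18 of \cite{For95}. Your closing caveat correctly identifies where the nonformal analytic content lies, and it is the same point the paper defers to Forman.
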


From here, it easily follows that the finite-dimensional spaces $\cH^p_{g_\dd}(P)$ of $\Delta_{g_\dd}$-harmonic forms extends smoothly to $\dd=0$.  To see this explicitly, we know that the spaces of formally harmonic $\Delta_\dd$ and $L_\dd$ power series are isomorphic over the ring of formal Laurent series in $\dd$.  To a given $\omega \in E^p_\infty$, we can associate the unique power series $\omega_\dd \in \Omega^p(P)\bd$ as described in Proposition \ref{prop:basis}.  Then, for some finite $i$,
\[ \rho_\dd^{-1} \omega_\dd = \dd^{-i} \psi_\dd \in \dd^{-i}\Omega^p(P)\bd,\]
where $\psi_\dd = \psi_0 + \dd \psi_1 + \cdots$ is formally harmonic.  The projection operators $\Pi_{\Ker L_{g\dd}}$ and $\Pi_{\Ker \Delta_{g\dd}}$ are related by 
\[ \Pi_{\Ker \Delta_{g\dd}} = \rho_\dd^{-1} \Pi_{\Ker L_{g\dd}} \rho_\dd. \]  Consider the polynomial $\psi_0 + \ldots \dd^l \psi_l$ obtained from truncating $\psi_\dd$ at some $l$ such that
\begin{align*} d(\psi_0 + \dd \psi_1 + \cdots + \dd^l \psi_l) &= 0 + O(\dd^{N+k+i+1}), \\
d^{*g_\dd}(\psi_0 + \dd \psi_1 + \cdots + \dd^l \psi_l) &= 0 + O(\dd^{N+k+i+1}). \end{align*}
It will then satisfy
\begin{align*}\label{eq:projdeltaharm}
&(\Pi_{\Ker \Delta_{g_\dd}} -\Id) (\psi_0 + \ldots \dd^l \psi_
l) = \rho_\dd^{-1} (\Pi_{\Ker L_{g_\dd}} - \Id) \dd^i(\omega + \dd \omega_1 + \ldots ) \\
& = \dd^i \rho_\dd^{-1} (0 + O(\dd^{k+1})) = 0 + O(\dd)
\end{align*}
The equalities above are obtained from \eqref{eq:projLharm}, the characterization of $\omega_\dd$, and the fact that $\rho_\dd^{-1}$ divides by powers of $\dd$ at most $k$.

Therefore, $\psi_0$ is the limit of a continuous 1-parameter family of $\Delta_{g_\dd}$-harmonic forms.  In fact, the argument can be repeated using higher-order polynomial approximations of $\psi_\dd$ to show the following.

\begin{theorem}[Cor 5.22 \cite{For95}, Cor 18 \cite{MM90}]\label{thm:adiabaticharmonic}The space $\cH_{g_\dd}^p(P)$ of $\Delta_{g_\dd}$-harmonic forms extends smoothly to $\cH^p(P)$ at $\dd=0$.
\end{theorem}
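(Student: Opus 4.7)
The strategy is to reduce Theorem \ref{thm:adiabaticharmonic} to Theorem \ref{thm:L0smooth}, which has already been established for the rescaled Laplacian $L_{g_\dd}$ via Forman's eigenvalue analysis. The isometry $\rho_\dd$ intertwines the two Laplacians, and their kernel projections, via
\[ \Pi_{\Ker \Delta_{g_\dd}} = \rho_\dd^{-1}\Pi_{\Ker L_{g_\dd}}\rho_\dd. \]
Since $\rho_\dd$ multiplies a form of horizontal degree $i$ by $\dd^i$, it is invertible for $\dd>0$ but degenerates at $\dd=0$; nevertheless, the spaces of formally $L_\dd$- and $\Delta_\dd$-harmonic power series correspond bijectively after extending scalars to the Laurent ring $\R((\dd))$, so the smoothness result can be transported across this correspondence provided one controls the finite loss from $\rho_\dd^{-1}$.

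Given $\omega \in E^p_\infty$, I would first apply Proposition \ref{prop:basis} to produce the canonical formal series $\omega_\dd = \omega + \dd\omega_1 + \cdots \in \Omega^p(P)\bd$ that is $d_\dd$- and $d^*_\dd$-closed to all orders. The conjugate $\rho_\dd^{-1}\omega_\dd$ may contain negative powers of $\dd$, but after multiplying by a single fixed power $\dd^i$ (depending only on $p$) one obtains an honest formal power series $\psi_\dd = \dd^i \rho_\dd^{-1}\omega_\dd = \psi_0 + \dd\psi_1 + \cdots$ that is formally $\Delta_{g_\dd}$-harmonic. For any desired smoothness order $k$, truncate $\psi_\dd$ at an order $l$ large enough that both $d$ and $d^{*g_\dd}$ annihilate the truncation modulo $\dd^{N+k+i+1}$, and take the candidate family to be the projection of this truncation onto $\Ker \Delta_{g_\dd}$.

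To verify that this yields a $C^k$-family with the prescribed Taylor expansion at $\dd = 0$, I would use the intertwining relation above together with Forman's estimate \eqref{eq:projLharm} applied to $\omega_\dd$. Conjugation gives
\[ (\Pi_{\Ker \Delta_{g_\dd}} - \Id)(\psi_0 + \dd \psi_1 + \cdots + \dd^l \psi_l) = \dd^i \rho_\dd^{-1}\bigl(\Pi_{\Ker L_{g_\dd}} - \Id\bigr)\omega_\dd = O(\dd^k), \]
so the projected family is $C^k$-close to the polynomial truncation to arbitrarily high order. Since $\dim \cH^p_{g_\dd}(P) = \dim H^p(P;\R) = \dim E^p_\infty$ is independent of $\dd$, the family does not lose rank in the limit; letting $k \to \infty$ produces the desired smooth extension. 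The main technical obstacle is the bookkeeping of the negative powers of $\dd$ introduced by $\rho_\dd^{-1}$: one must verify that this fixed finite loss is always dominated by the high-order vanishing coming from the truncated formal solution, which works precisely because $i$ is determined in advance by the form degree while the truncation order $l$ can be chosen as large as needed.
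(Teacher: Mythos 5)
Your proposal is correct and follows essentially the same route as the paper: conjugating by the isometry $\rho_\dd$ to reduce to Theorem \ref{thm:L0smooth}, using the formal series of Proposition \ref{prop:basis}, clearing the finite negative power of $\dd$, truncating, and invoking the projection estimate \eqref{eq:projLharm} to get arbitrarily high-order agreement. The only difference is cosmetic bookkeeping of the truncation order, which the paper handles by iterating the same argument for successively higher orders.
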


\begin{prop}\label{prop:EtoDelta}Suppose $\omega_0 + \dd \omega_1 + \cdots + \dd^k\omega_k + O(\dd^{k+1}) \in \Omega^k(P)\bd$ is a formally $L_\dd$-harmonic power series that is in the image of $\rho_\dd$; i.e. applying $\rho_\dd^{-1}$ produces no negative powers of $\dd$.  Then, applying $\rho_\dd\-1$ and taking the constant term gives an element of $\Ker \Delta^k_0$:
\[ \left( \rho_\dd \-1 (\omega_0 + \dd\omega_1 + \cdots +\dd^k \omega_k) \right)_{\dd=0} = \]
\[=  \omega_0^{0,k} + \omega^{1,k-1}_1 + \omega^{2,k-2}_2 + \cdots + \omega^{k,0}_k \in \cH^k(P) \subset \Omega^k(P). \]
\begin{proof}
Due to Theorem \ref{thm:L0smooth}, there exists a family of $L_{g_\dd}$-harmonic forms
\[ \widetilde{\omega} \in C^\infty([0,1], \Ker L_{g_\dd} ) \]
such that, close to $\dd=0$,
\[ \widetilde{\omega}(\dd) \underset{\dd=0}\sim \omega_0 + \dd\omega_1 + \cdots + \dd^k\omega_k + O(\dd^{k+1}).\]
Under the isometry $\rho_\dd\-1$,
\[ \rho_\dd\-1\widetilde{\omega} \in C^\infty([0,1],\Ker \Delta^k_{g_\dd}),\]
and
\[ \rho_\dd\-1 \widetilde{\omega} \underset{\dd=0}\sim \rho_\dd\-1 \left(\omega_0 + \dd\omega_1+\cdots + \dd^k\omega_k + O(\dd^{k+1}) \right).\]
The isometry $\rho_\dd\-1$ divides at most by $\dd^k$.  Therefore,
\begin{align*} \rho_\dd\-1 \left(\omega_0 + \dd\omega_1 + \cdots + \dd^k\omega_k + O(\dd^{k+1})\right) &= \\
\left( \omega^{0,k}_0 + \omega^{1,k-1}_1 + \omega^{2,k-2}_2 + \cdots + \omega^{k,0}_k\right) &+ O(\dd).\end{align*}
Consequently,
\begin{align*} \left( \rho_\dd\-1\widetilde{\omega} \right) (0) &= \left(\rho_\dd \-1 (\omega + \dd\omega_1 + \cdots + \dd^k \omega_k)\right)_{\dd=0}\\
&= \omega^{0,k} + \omega^{1,k-1}_1 + \omega^{2,k-2}_2 + \cdots + \omega^{k,0}_k \in \Ker \Delta^k_0.
\end{align*}
\end{proof}\end{prop}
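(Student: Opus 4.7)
The plan is to combine Theorem \ref{thm:L0smooth} with a bookkeeping of the bigrading under $\rho_\dd^{-1}$. Since the given formal power series $\omega_\dd = \omega_0 + \dd \omega_1 + \cdots + \dd^k \omega_k + O(\dd^{k+1})$ is formally $L_\dd$-harmonic, I first want to realize it as the genuine Taylor expansion at $\dd = 0$ of a smooth family of $L_{g_\dd}$-harmonic forms. This is exactly the content of Theorem \ref{thm:L0smooth}: the space $\Ker L_{g_\dd}$ extends smoothly across $\dd = 0$, with boundary value $E^k_\infty$, and by Proposition \ref{prop:basis} every formally harmonic power series is, up to any chosen order, the truncation of the Taylor expansion of such a smooth family. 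So there exists $\widetilde{\omega} \in C^\infty([0,1], \Omega^k(P))$ with $L_{g_\dd} \widetilde{\omega}(\dd) = 0$ for all $\dd > 0$ and $\widetilde{\omega}(\dd) \sim_{\dd = 0} \omega_\dd$.

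Next I would transport to the $\Delta_{g_\dd}$ side using the isometry $\rho_\dd$. Since $L_{g_\dd} = \rho_\dd \Delta_{g_\dd} \rho_\dd^{-1}$, the family $\rho_\dd^{-1} \widetilde{\omega}(\dd)$ lies in $\Ker \Delta_{g_\dd}$ for each $\dd > 0$. The key point is that $\rho_\dd^{-1}$ acts on $\Omega^{i,j}(P)$ by multiplication by $\dd^{-i}$, so it can introduce negative powers of $\dd$. The hypothesis that $\omega_\dd$ lies in the image of $\rho_\dd$ — i.e.\ that $\rho_\dd^{-1}\omega_\dd$ contains no negative powers of $\dd$ — is precisely what guarantees that $\rho_\dd^{-1} \widetilde{\omega}(\dd)$ extends smoothly to $\dd = 0$ rather than blowing up.

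It then remains to read off the constant term. Decomposing each $\omega_m = \sum_{i+j=k}\omega_m^{i,j}$ by bigrading and applying $\rho_\dd^{-1}$ componentwise gives
\[ \rho_\dd^{-1} \omega_\dd \;=\; \sum_{m=0}^{k} \sum_{i+j = k} \dd^{m-i}\, \omega_m^{i,j} + O(\dd),\]
and the hypothesis forces $\omega_m^{i,j} = 0$ whenever $m < i$, so all exponents $m - i$ appearing are non-negative. The coefficient of $\dd^0$ is therefore exactly $\sum_{i=0}^{k} \omega_i^{i, k-i} = \omega_0^{0,k} + \omega_1^{1,k-1} + \cdots + \omega_k^{k,0}$. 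Evaluating $\rho_\dd^{-1}\widetilde{\omega}$ at $\dd = 0$ gives this form, and since $\rho_\dd^{-1}\widetilde{\omega}$ is a smooth family of $\Delta_{g_\dd}$-harmonic forms, its value at $\dd = 0$ is by definition an element of $\Ker \Delta^k_0 = \cH^k(P)$.

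The only mild subtlety — and where I would have to be careful rather than where there is a serious obstacle — is the interaction between the order of truncation $k$ of the given power series and the order of the Taylor approximation produced by Theorem \ref{thm:L0smooth}. Since $\rho_\dd^{-1}$ divides by at most $\dd^k$ on a $k$-form, an $O(\dd^{k+1})$ remainder in $\omega_\dd$ transports to an $O(\dd)$ remainder in $\rho_\dd^{-1}\omega_\dd$, which is harmless when setting $\dd = 0$. This is what makes the constant term well-defined despite $\omega_\dd$ being given only to order $\dd^k$.
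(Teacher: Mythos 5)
Your proposal is correct and follows essentially the same route as the paper: invoke Theorem \ref{thm:L0smooth} to realize the formal series as the Taylor expansion of a smooth family of $L_{g_\dd}$-harmonic forms, transport by the isometry $\rho_\dd^{-1}$ to a family of $\Delta_{g_\dd}$-harmonic forms, and read off the constant term using that $\rho_\dd^{-1}$ divides by at most $\dd^k$ so the $O(\dd^{k+1})$ tail contributes only $O(\dd)$. Your explicit bigraded bookkeeping (noting $\omega_m^{i,j}=0$ for $m<i$) just spells out what the paper leaves implicit in the hypothesis that the series lies in the image of $\rho_\dd$.
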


\begin{cor}\label{cor:Ei0toDelta}There is an inclusion of vector spaces (as subspaces of $\Omega^*(P)$)
\[ E^{i,0}_\infty \subset \cH^i(P) \subset \Omega^i(P). \]
\begin{proof}
Let $\omega\in E^{i,0}_\infty$.  By definition, there exists a power series of the form
\[ \omega + O(\dd) \in \Ker L_\dd,\]
and consequently
\[ \dd^k\omega + O(\dd^{k+1}) \in \Ker L_\dd.\]
Applying Proposition \ref{prop:EtoDelta}, we see that
\[ \left(\rho_\dd\-1(\dd^k\omega)\right)_{\dd=0} = \omega \in \Ker \Delta^k_0.\]
\end{proof}
\end{cor}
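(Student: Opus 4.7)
The plan is to produce a formally $L_\dd$-harmonic power series with $\omega$ as its leading term and then apply Proposition \ref{prop:EtoDelta}. Starting from $\omega \in E^{i,0}_\infty \subset E^i_\infty$, Proposition \ref{prop:basis} supplies a unique formal series $\omega_\dd = \omega + \dd\omega_1 + \dd^2\omega_2 + \cdots \in \Omega^i(P)\bd$ with $d_\dd \omega_\dd = d^*_\dd \omega_\dd = 0$, hence formally $L_{g_\dd}$-harmonic.

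The naive step of feeding $\omega_\dd$ directly into Proposition \ref{prop:EtoDelta} fails: that proposition requires the series to lie in the image of $\rho_\dd$, whereas the leading term $\omega \in \Omega^{i,0}(P)$ satisfies $\rho_\dd\-1 \omega = \dd^{-i}\omega$, contributing a negative power of $\dd$. The remedy I would use is to multiply through by $\dd^i$. The rescaled series $\dd^i \omega_\dd$ is still formally $L_{g_\dd}$-harmonic by linearity, and since each coefficient $\omega_j$ is an $i$-form whose bidegree components $\omega_j^{a,i-a}$ satisfy $a \leq i$, the factor $\dd^i$ absorbs every negative power introduced by $\rho_\dd\-1$. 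Consequently $\dd^i \omega_\dd$ lies in the image of $\rho_\dd$, and Proposition \ref{prop:EtoDelta} now applies.

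It remains to read off the constant term. The shifted series $\dd^i \omega_\dd$ has vanishing coefficients in positions $0, 1, \ldots, i-1$ and coefficient $\omega$ in position $i$, so the output $\omega_0^{0,i} + \omega_1^{1,i-1} + \cdots + \omega_i^{i,0}$ prescribed by Proposition \ref{prop:EtoDelta} collapses to $\omega^{i,0} = \omega$, the final equality using that $\omega$ is already purely horizontal. Therefore $\bigl(\rho_\dd\-1(\dd^i \omega_\dd)\bigr)_{\dd=0} = \omega \in \cH^i(P)$, establishing the desired inclusion $E^{i,0}_\infty \subset \cH^i(P)$.

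I do not anticipate a serious obstacle: the only non-formal choice is the $\dd^i$ rescaling, selected precisely to match the weight assigned to a purely horizontal $i$-form by $\rho_\dd\-1$. Everything else is bookkeeping dictated by the bigrading and by the definitions already in place.
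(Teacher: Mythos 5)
Your proposal is correct and follows essentially the same route as the paper: obtain a formally $L_\dd$-harmonic series with leading term $\omega$, multiply by $\dd^i$ so that $\rho_\dd\-1$ introduces no negative powers, and apply Proposition \ref{prop:EtoDelta} to read off $\omega$ as the constant term. Your version merely makes explicit the appeal to Proposition \ref{prop:basis} and the bookkeeping showing the shifted series lies in the image of $\rho_\dd$, which the paper leaves implicit.
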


\section{Right-Invariant Forms on a Principal Bundle}\label{section:bigradedcomplex}
We want to use the adiabatic spectral sequence to analyze harmonic forms on a principal bundle.  If the metric on $P$ is invariant under the free right $G$-action, then the harmonic forms will be contained in the subcomplex of right-invariant forms.  We proceed to characterize these and relate them to the bigraded complex considered in Section \ref{section:ss}.  Once this characterization is complete, the calculations in Section \ref{section:harmonicforms} follow with relative ease.  In particular, a number of maps in the bigraded complexes \eqref{eq:complex} and \eqref{eq:dualcomplex} are 0, which leads to a great deal of simplification.  Much of the following language of principal bundles, as well as the later description of the Chern--Simons 3-form, models that used in \cite{Fre95}.  More explicit details and proofs for the following statements are given in \cite{Redden06}.

\subsection{Vertical Distribution}Let $G$ be a compact Lie group, and define $\fg$ to be the associated Lie algebra of left-invariant vector fields on $G$.  A principal $G$-bundle $P\overset{\pi}\to M$ is a manifold $P$ with a free right $G$-action such that $P \overset{\pi}\to P/G=M$ is the natural quotient.  This implies that each fiber
\[ P_x \= \pi^{-1}(x) \subset P \]
is a right $G$-torsor ($P_x$ has a free and transitive right $G$-action).  The manifold $P_x$ is diffeomorphic to $G$, but only after choosing an initial point $p\in P_x$.  A choice of point $p\in P_x$, gives the (right) $G$-equivariant map
\begin{align*} \tau_p: P_x &\to G \\
p\cdot g &\mapsto g.
\end{align*}
If we choose a different $p'\in P_x$, we see that $p = p'\cdot h$ for a unique $h\in G$, and that
\[ \tau_p' = L_h \o \tau_p,\]
where $L_h:G\to G$ is left multiplication by $G$.

Therefore, any left-invariant structure on $G$ can naturally be placed on a right $G$-torsor.  For example, let $\theta \in \Omega^1(G;\fg)$ be the Maurer-Cartan 1-form, defined by associating to any vector its left-invariant extension.  Then,
\begin{align*}
L_g^* \theta &= \theta, \\
R_g^* \theta &= Ad_{g^{-1}} \theta,
\end{align*}
so $\theta$ pulls back to a well-defined $\fg$-valued 1-form on $P_x$, which we also denote as $\theta$, and satisfies $R_g^*\theta = Ad_{g^{-1}}\theta$.  This gives a natural isomorphism
\[ TP_x \overset{\theta}\to P_x \times \fg \]
that is equivariant with respect to the right $G$-action on $TP_x$, $P_x$, and $\fg$ ($G$ acts on $\fg$ from the right by the inverse Adjoint representation).  Consequently, a right-invariant vector field on $P_x$ is equivalent to a function $P\overset{v}\to \fg$ such that $v(pg)=Ad_{g^{-1}}v(p)$ for all $g\in G$.  More concisely, it is an element of the vector space
\begin{equation}\label{eq:torsorisos} \fg_{P_x} \= P_x \times_{Ad} \fg = P_x \times \fg / \left((p, v)\sim (pg, Ad_{g^{-1}}v) \right).\end{equation}

The Lie bracket $[\cdot, \cdot]:\fg\times \fg \to \fg$ is $Ad$-equivariant because $Ad_g:\fg \to \fg$ is a Lie algebra automorphism.  Therefore, the product bracket on $(P_x \times \fg)\times (P_x \times \fg) \to P_x \times \fg$ descends to a Lie bracket
\begin{equation}\label{eq:rightbracket} \fg_{P_x} \times \fg_{P_x} \overset{[ \cdot, \cdot]}\longrightarrow \fg_{P_x}.\end{equation}
Unpackaging these isomorphisms shows that the bracket \eqref{eq:rightbracket} is the natural Lie bracket of right-invariant vector fields on $P_x$.  We can then consider the cochain complex of right-invariant forms $\{ \Lambda^\bullet \fg_{P_x}^*, d_{\fg}\},$ where
\begin{equation}\label{eq:dg} d_\fg \psi (X_0, \ldots, X_k) = \sum_{i<j} (-1)^{i+j} \psi( [X_i,X_j], \ldots, \widehat{X_i}, \ldots, \widehat{X_j}, \ldots, X_k),
\end{equation}
for $\psi\in \Lambda^k\fg_{P_x}^*, X_i \in \fg_{P_x}$.  A classical theorem of Chevalley and Eilenberg \cite{CE48} shows that the inclusion of cochain complexes
\[ \{ \Lambda^\bullet \fg_{P_x}^*, d_{\fg} \} \iso \{ \Omega^\bullet(P_x)^G, d\} \into \{ \Omega^\bullet(P_x), d\} \] induces an isomorphism on the cohomology of the chain complexes.

This discussion of right $G$-torsors carries over immediately to principal $G$-bundles.  The projection $\pi$ defines the vertical distribution $\TVP\subset TP$ of vectors along the fibers of $P$ by
\[ \TVP\= \Ker \pi_* \subset TP, \quad \TVP_{|\pi^{-1}x} \iso TP_x.\]
Translating \eqref{eq:torsorisos} into families of $G$-torsors, we obtain the natural $G$-equivariant isomorphisms
\begin{equation} \label{eq:verticaliso} \TVP \iso P \times \fg \iso \pi^* \fg_P,\end{equation}
where $\fg_P \overset{\pi}\to M$ is the Adjoint bundle $P\times_{Ad}\fg$.  The bracket in \eqref{eq:rightbracket} gives us the Lie bracket of right-invariant vertical vector fields on $P$
\[ \fg_P \times  \fg_P \overset{[\cdot, \cdot]}\longrightarrow \fg_P.\]
This induces a map of vector bundles
\[ d_\fg: \Lambda^i \fg_P^* \to \Lambda^{i+1} \fg_P^*\]
as defined in \eqref{eq:dg}.

\subsection{Horizontal Distribution}While the vertical distribution $\TVP$ requires no extra choices, there is no natural horizontal distribution $\THP$.  Instead, a choice of connection $\Theta$ on $P$ is equivalent to the equivariant choice of a horizontal distribution $\THP\subset TP$ such that
\[ \THP \oplus \TVP = TP.\]
In particular, if $\Theta \in \Omega^1(P;\fg)$ is the connection 1-form, then
\[ \THP \= \Ker \Theta.\]
This induces the $G$-equivariant isomorphism
\begin{equation}\label{eq:horizontaliso} \THP \iso \pi^* TM.\end{equation}

A connection $\Theta$ on $P$ gives a covariant derivative $\nabla$ on sections of any associated bundle.  Suppose $v \in \Gamma(M,\Lambda^k\fg_P^*)$.  One can take the derivative of a function whose values live in a fixed vector space, and $\widetilde{v} \= \pi^*v \in \Gamma(P,\Lambda^k \fg^*)^G$.  Given a vector $X\in T_xM$, the isomorphisms $\pi^*:T_xM \to \THP_p$ (defined by the connection) give a $G$-equivariant family of vectors $\widetilde{X}$ in $TP_{|\pi^{-1}(x)}$.  The derivative of $\widetilde{v}$ in the direction of $\widetilde{X}$ is well-defined, resulting in the equivariant $\widetilde{X}\widetilde{v} \in \Gamma(P,\Lambda^k \fg^*)^G$.  The isomorphism \eqref{eq:verticaliso} then defines
\begin{equation}\label{eq:covdiff} \widetilde{\nabla_X v} \= \widetilde{X} \widetilde{v} \in \Gamma(P,\Lambda^k \fg^*)^G\iso \Gamma(M, \Lambda^k \fg_P^*).\end{equation}
The operator $\nabla: \Gamma(M,\Lambda^k \fg_P^*)\to \Gamma (M,T^*M\otimes \Lambda^k \fg_P^*)$ is called the connection on the associated vector bundle and extends uniquely to a first order differential operator $d_\nabla$ on the complex
\begin{equation}\label{eq:dnabla} \Gamma(M;\Lambda^k \fg_P^*) \overset{\nabla}\longrightarrow \Omega^1(M;\Lambda^k \fg_P^*) \overset{d_\nabla}\longrightarrow\Omega^2(M;\Lambda^k \fg_P^*) \overset{d_\nabla}\longrightarrow\Omega^3(M;\Lambda^k \fg_P^*) \overset{d_\nabla}\longrightarrow \cdots \end{equation}
by requiring that $d_\nabla$ satisfy the Leibniz rule
\[ d_\nabla(\omega\otimes v) = (d\omega)\otimes v + (-1)^i \omega\otimes(\nabla v) \]
for all $\omega \in \Omega^i(M)$ and $v \in \Gamma (M,\Lambda^k \fg_P^*)$.

The possibility that $T^HP$ is not an integrable distribution leads to the notion of curvature, and the curvature 2-form $\Omega \in \Omega^2(P;\fg)$ is defined by the equation
\[ \Omega \= d\Theta + \frac{1}{2}[\Theta\wedge\Theta]. \]
The form $\Omega$ is only non-zero when evaluated on horizontal vectors.  If $X_H, Y_H \in \THP_p$, then
\begin{equation}\label{eq:curvature}\Omega(X_H, Y_H) = d\Theta(X_H, Y_H) = - \Theta([X_H, Y_H]).\end{equation}
Therefore, $\Omega\in \Omega^2(M, \fg_P)$, and we can form the contraction $\iota_\Omega$
\begin{equation}\label{eq:iotacurv} \Omega^i(M;\Lambda^j\fg^*_P) \overset{\iota_\Omega}\longrightarrow \Omega^{i+2}(M;\Lambda^{j-1}\fg^*_P).\end{equation}

\subsection{Bi-graded Complex}
The decomposition $TP = \THP \oplus \TVP$ induces a bigrading on forms
\[
 \Omega^{i,j}(P) \= \Gamma(P, \Lambda^i \THP \otimes \Lambda^j \TVP).
\]
The $G$-equivariant descriptions of $\TVP$ and $\THP$ in \eqref{eq:verticaliso} and \eqref{eq:horizontaliso}, respectively, imply that $G$-invariant forms on $P$ decompose as
\begin{align*}\label{eq:invariantforms} \Omega^k(P)^G &= \bigoplus_{i+j=k} \Omega^{i,j}(P)^G \\
& \iso \bigoplus_{i+j=k} \Gamma(M, \Lambda^i TM^* \otimes \Lambda^j \fg_P^*) \\
&=\bigoplus_{i+j=k} \Omega^i(M;\Lambda^j \fg_P^*).\end{align*}
Thus, an invariant $(i,j)$-form is naturally an $i$-form on $M$ with values in the bundle $\Lambda^k \fg_P^*$, where $\fg_P\to M$ is the Adjoint bundle of $P$.

The exterior derivative $d$ decomposes with respect to the distributions $\TVP$ and $\THP$.  In general,
\[ d= d^{-1, 2} + d^{0,1} + d^{1,0} + d^{2,-1},\]  
where $d^{a,b}: \Omega^{i,j}(P) \to \Omega^{i+a, j+b}(P)$, giving a bigraded cochain complex $\{ \Omega^{i,j}(P), d^{a,b}\}$.  When restricted to right-invariant forms, the complex $\{ \Omega^{i,j}(P), d^{a,b}\}$ has an explicit description using \eqref{eq:dg}, \eqref{eq:dnabla}, and \eqref{eq:iotacurv}.

\begin{prop}\label{prop:bigradedcomplex}Under the isomorphism $\{ \Omega^{i,j}(P)^G\} \overset{\pi^*}\leftarrow \{ \Omega^i(M, \Lambda^j \fg_P^*)\} $, the components of the exterior derivative $d$ are related by
\begin{itemize} \item $d^{-2,1} = 0$
\item $d^{0,1} \leftrightarrow   (-1)^i d_\fg $
\item $d^{1,0} \leftrightarrow d_\nabla$
\item $d^{2,-1} \leftrightarrow (-1)^i \iota_\Omega$
\end{itemize}
\end{prop}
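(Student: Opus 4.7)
The plan is to evaluate $d\omega$ for an invariant $(i,j)$-form $\omega = \pi^*\bar{\omega}$ using Cartan's global invariant formula and to extract each bidegree component by a careful choice of test vectors. For the component $d^{a,b}$ I will feed in $i+a$ horizontal lifts $\tilde{X}_1, \ldots, \tilde{X}_{i+a}$ of vector fields $\bar{X}_k$ on $M$ together with $j+b$ right-invariant vertical vector fields $V_1, \ldots, V_{j+b}$ corresponding to sections $\sigma_m$ of $\fg_P$. Two simplifications then drive the computation: the function $\omega(\tilde{X}_1, \ldots, V_1, \ldots)$ is right $G$-invariant, hence pulled back from $M$, so vertical derivatives kill it while $\tilde{X}_k$-derivatives reduce to base derivatives; and in each bracket term $\omega([Y_k, Y_l], \ldots)$ only the horizontal or vertical component of $[Y_k, Y_l]$ of matching bidegree can possibly contribute.

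Under this setup, the two ``easy'' identifications and the vanishing statement each isolate a single class of brackets. The vanishing $d^{-1,2} = 0$ reduces to the fact that $[V_m, V_n]$ has no horizontal component, which is immediate from integrability of $\TVP$; indeed $[V_m, V_n]$ is the right-invariant vertical field corresponding to $[\sigma_m, \sigma_n] \in \fg_P$. For $d^{0,1}$ only vertical-vertical brackets survive, and since $V_m \leftrightarrow \sigma_m$ intertwines brackets by \eqref{eq:rightbracket}, the surviving sum matches the Chevalley--Eilenberg formula \eqref{eq:dg} applied fiberwise to $\bar{\omega}$; the factor $(-1)^i$ appears when commuting the bracketed vertical vector past the $i$ horizontal arguments into its canonical slot. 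For $d^{2,-1}$ only the vertical part of horizontal-horizontal brackets contributes, and the curvature identity \eqref{eq:curvature} computes $[\tilde{X}_a, \tilde{X}_b]_V$ as $-\Omega(\bar{X}_a, \bar{X}_b)$; the same permutation yields $(-1)^i \iota_\Omega$.

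The identification $d^{1,0} \leftrightarrow d_\nabla$ is the most delicate, since three classes of terms conspire. The first sum in Cartan's formula contributes $\sum_k (-1)^{k-1}\pi^*(\bar{X}_k f_k)$, where $f_k$ is $\bar{\omega}$ with $\bar{X}_k$ omitted and evaluated on the $\sigma_m$. The horizontal part of $[\tilde{X}_a, \tilde{X}_b]$ is the horizontal lift of $[\bar{X}_a, \bar{X}_b]$ and supplies the bracket terms of Cartan's formula on $M$. Finally, the horizontal-vertical brackets $[\tilde{X}_k, V_m]$ are vertical and, by the definition \eqref{eq:covdiff} of the associated-bundle connection, represent $\nabla_{\bar{X}_k}\sigma_m$ in $\fg_P$; these provide exactly the Leibniz corrections needed to turn the naive $\bar{X}_k$-derivative of $f_k$ into the covariant $\nabla_{\bar{X}_k}$ on $\Lambda^j \fg_P^*$. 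Assembling the three classes recovers Cartan's formula for $d_\nabla \bar{\omega}$ evaluated on $\bar{X}_1, \ldots, \bar{X}_{i+1}$ and $\sigma_1, \ldots, \sigma_j$.

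The main obstacle is not conceptual but bookkeeping: verifying the precise signs from the permutations that place bracketed vectors into canonical horizontal-then-vertical order, and checking that the three geometric identifications $[V_m, V_n]\leftrightarrow[\sigma_m,\sigma_n]$, $[\tilde{X}_k, V_m]_V \leftrightarrow \nabla_{\bar{X}_k}\sigma_m$, and $[\tilde{X}_a, \tilde{X}_b]_V \leftrightarrow -\Omega(\bar{X}_a,\bar{X}_b)$ produce exactly the signs claimed. In particular, I want to check that the $(-1)^i$ appearing in $d^{0,1}$ and $d^{2,-1}$ does not also infect $d^{1,0}$, where it must be absorbed into the sign conventions built into the definition of $d_\nabla$.
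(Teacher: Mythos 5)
Your proposal is correct, but it proves the proposition by a different route than the paper. The paper exploits the fact that each component $d^{a,b}$ obeys the Leibniz rule, so it only computes the components on the two kinds of generators: on pullbacks of base forms, naturality $d\pi^* = \pi^* d_M$ shows the only contribution is $d^{1,0}=d_M$, while on invariant sections $\pi^*\psi$, $\psi\in\Gamma(M;\Lambda^j\fg_P^*)$, the three identifications $d^{0,1}\pi^*\psi = \pi^*d_\fg\psi$, $d^{1,0}\pi^*\psi = \pi^*\nabla\psi$ (via \eqref{eq:covdiff}) and $d^{2,-1}\pi^*\psi = \pi^*\iota_\Omega\psi$ (via \eqref{eq:curvature}) are checked directly; the signs $(-1)^i$ then fall out of the Leibniz rule with no permutation bookkeeping. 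You instead apply the global Cartan formula to a general invariant $(i,j)$-form evaluated on horizontal lifts and right-invariant vertical fields, and sort terms by bidegree counting. Your three bracket identifications are all correct: $[V_m,V_n]$ is vertical and corresponds to $[\sigma_m,\sigma_n]$, the vertical part of $[\tilde X_a,\tilde X_b]$ is $-\Omega(\bar X_a,\bar X_b)$ by \eqref{eq:curvature}, and $[\tilde X_k,V_m]$ is vertical with $\Theta([\tilde X_k,V_m]) = \tilde X_k(\Theta(V_m))$ (since $\Omega$ and $[\Theta\wedge\Theta]$ vanish on a horizontal--vertical pair), which is precisely the equivariant function representing $\nabla_{\bar X_k}\sigma_m$ as in \eqref{eq:covdiff}; likewise the observation that invariant contractions descend to $M$, killing vertical derivatives, is what makes $d^{0,1}$ purely algebraic. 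What your approach buys is that it treats arbitrary (not necessarily decomposable) invariant forms in one pass and makes visible how the $[\tilde X_k,V_m]$ terms supply exactly the Leibniz corrections that turn naive $\bar X_k$-derivatives into $\nabla_{\bar X_k}$; what it costs is the deferred permutation-sign bookkeeping for the $(-1)^i$ factors, which in the paper's argument is absorbed automatically by the graded Leibniz rule. That residual sign check is routine and does not constitute a gap, though to call the proof complete you would need to carry it out; note also that the paper's statement of the vanishing component ($d^{-2,1}$) is a typo for $d^{-1,2}$, which is the component your integrability argument correctly addresses.
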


We draw the bigraded complex isomorphic to $\{ \Omega^{i,j}(P)^G,d\}$ below:
{\scriptsize
\[ \hskip -.45in \xymatrix{ &\vdots &\vdots &\vdots &\vdots &\\
&\Omega^0(M;\Lambda^2\fg^*_P) \ar[u]_{d_\fg} \ar[r]^{\nabla} \ar@{.>}[drr]^>>>>{\iota_\Omega} & \Omega^1(M;\Lambda^2\fg^*_P) \ar[u]_{-d_\fg} \ar[r]^{d_\nabla} \ar@{.>}[drr]^>>>>{-\iota_\Omega}& \Omega^2(M;\Lambda^2\fg^*_P) \ar[u]_{d_\fg} \ar[r]^{d_\nabla} \ar@{.>}[drr]^>>>>{\iota_\Omega}& \Omega^3(M;\Lambda^2\fg^*_P) \ar[u]_{-d_\fg} \ar[r]^{d_\nabla}&\cdots\\
&\Omega^0(M;\fg^*_P)  \ar[u]_{d_\fg} \ar[r]^{\nabla} \ar@{.>}[drr]^>>>>{\iota_\Omega}& \Omega^1(M;\fg^*_P) \ar[u]_{-d_\fg} \ar[r]^{d_\nabla} \ar@{.>}[drr]^>>>>{-\iota_\Omega}& \Omega^2(M;\fg^*_P) \ar[u]_{d_\fg} \ar[r]^{d_\nabla} \ar@{.>}[drr]^>>>>{\iota_\Omega}& \Omega^3(M;\fg^*_P) \ar[u]_{-d_\fg} \ar[r]^{d_\nabla}&\cdots\\
&\Omega^0(M;\R) \ar[u]_{0} \ar[r]^{d_M} & \Omega^1(M;\R) \ar[u]_{0} \ar[r]^{d_M}& \Omega^2(M;\R) \ar[u]_{0} \ar[r]^{d_M}& \Omega^3(M;\R) \ar[u]_{0} \ar[r]^{d_M}&\cdots
} \] }
Note that above is not a commutative diagram, but summing over all possible paths between two points in the complex gives zero.  These relationships are explicitly given by $(d^{0,1} + d^{1,0} + d^{2,-1})^2=0$.

\begin{proof}[Proof of Proposition \ref{prop:bigradedcomplex}]
We will denote the pushforward of the standard exterior derivative by $\pi_*d$ and note that for any $\omega\otimes \psi \in \Omega^i(M;\Lambda^j\fg^*_P)$,
\[ (\pi_* d) (\omega\otimes \psi) \= (\pi^*)^{-1} d (\pi^* \omega\otimes \pi^* \psi).\]

The $d^{-2,1}=0$ because the distribution $\TVP$ is integral.  Each component of $d$ satisfies the Leibniz rule, so to describe $d$ on $\Omega^i(M;\Lambda^j\fg^*_P)$, it suffices to say how each component acts on $\Omega^i(M)$ and $\Gamma(M;\Lambda^j\fg^*_P)$.

For $(\pi_*d):\Omega^i(M;\R) \to \Omega^{i+1}(M;\R)\oplus\Omega^i(M; \fg^*_P)$, $\pi_*d = \pi_*d^{1,0}$ and is the exterior derivative on $M$, denoted $d_M$.  This follows from the naturality of $d$, 
\[ d\pi^*\Omega^i(M) = \pi^* d_M\Omega^i(M).\]

Let $\psi \in \Gamma(M, \Lambda^k \fg_P^*)$.  To calculate $\pi_*d^{0,1}$, let $X_0, \ldots, X_k \in \Gamma(M,\fg_P)$.  Tracing through definitions and \eqref{eq:dg} gives
\begin{align*}
d^{0,1} \pi^* \psi(\pi^*X_0, \ldots, \pi^*X_k) &= \sum_{i<j} \pi^*\psi ([\pi^*X_i, \pi^* X_k], \ldots, \widehat{\pi^*X_i}, \ldots, \widehat{\pi^* X_j}, \ldots)\\
&= \pi^* d_\fg \psi(X_0, \ldots, X_k).
\end{align*}
The Leibniz rule then implies that for $\omega \in \Omega^i(M), \psi \in \Gamma(M, \Lambda^j \fg_P^*)$,
\begin{align*}
d^{0,1} \pi^*(\omega\otimes \psi) &= (d^{0,1}\pi^*\omega) \wedge \pi^*\psi + (-1)^i (\pi^*\omega)\wedge (d^{0,1}\pi^*\psi) \\
&= (-1)^i \pi^*(\omega \otimes d_\fg \psi).
\end{align*}
Therefore, $\pi_*d^{0,1} = (-1)^i d_{\fg}$.  To calculate $\pi_*d^{1,0}$, let $X\in \Gamma(M,TM)$ and use \eqref{eq:covdiff}:
\begin{align*}
(d^{1,0}\pi^*\psi) (X) = (d \pi^*\psi)(X) = \widetilde{X} \widetilde{\psi} = \pi^* \nabla_X \psi.
\end{align*}
The Leibniz rule implies
\begin{align*}
 d^{1,0}\pi^*(\omega \otimes \psi) &= (d^{1,0}\pi^*\omega)\wedge (\pi^*\psi) + (-1)^i (\pi^*\omega)\wedge(d^{1,0}\pi^*\psi) \\
&= \pi^*(d_M\omega \otimes \psi) + (-1)^i \pi^*( \omega \otimes \nabla \psi) \\
&= \pi^* d_\nabla(\omega\otimes \psi).
\end{align*}
A similar use of \eqref{eq:curvature} shows that
\[ d^{2,-1}(\pi^*\psi) = \pi^*( \iota_\Omega \psi),\]
and hence $\pi_*d^{2,-1}$ is given by contracting along the $\fg_P$-valued 2-form $\Omega$.
\end{proof}

\subsection{Riemannian metric and the adjoint $d^*$}
We now construct a right-invariant Riemannian metric on $P$.  Let $g_M$ be a Riemannian metric on $M$, which in turn induces a metric on the distribution $\THP = \pi^*TM$.  Let $g_G$ be a bi-invariant metric on the Lie group $G$.  Such metrics $g_G$ exist due to the compactness of $G$ and are equivalent to $Ad$-invariant metrics on $\fg$.  Since $g_G$ is an $Ad$-invariant metric on $\fg$, it induces a right-invariant metric on $\TVP= \pi^*\fg_P$.  We also let $\Lambda^i g_M^*$ and $\Lambda^j g_g^*$ denote the induced metrics on $\Lambda^i TM^*$ and $\Lambda^j \fg_P^*$, respectively.  Using the connection $\Theta$ to give an orthogonal splitting $TP = \pi^* (TM \oplus \fg_P)$, we obtain a canonical metric $g_P$ on P by
\[ g_P \= \pi^*(g_M\oplus g_G).\]
More explicitly,
\[ g_P(X_1,X_2) \= g_M(\pi_*X_1, \pi_* X_2) + g_G(\Theta X_1, \Theta X_2).\]
The metric $g_P$ is evidently right-invariant, and we have the following isomorphism of cochain complexes with metric:
 \begin{align*}
 \{ \Omega^{i,j}(P), \pi^*( \Lambda^i g_M^* \otimes \Lambda^j g_G^*) \} ^G \overset{\iso}\longleftarrow \{ \Omega^i(M, \Lambda^j \fg_P^*), \Lambda^i g_M^* \otimes \Lambda^j g_G^* \}.
 \end{align*}

The adjoint $d^*:\Omega^k(P)\to \Omega^{k-1}(P)$ (with respect to $g_P$) commutes with isometries, and therefore, $d^*$ restricts to the right-invariant complex
\begin{equation}\label{eq:invtdualcomplex} \left( \Omega^i(M;\Lambda^j\fg^*_P),\pi_*d^* \right) \iso \left( \Omega^{i,j}(P)^G,d^{*g_P} \right) \into \left( \Omega^{i,j}(P),d^{*g_P} \right). \end{equation}
The adjoint $d^*$ decomposes under the bigrading,
\[ d^* = d^{0,1*} + d^{1,0*} + d^{2,-1*}, \]
where $d^{a,b*}:\Omega^{i,j}(P) \to \Omega^{i-a, j-b}(P)$.  Using our description of the differentials $d^{a,b}$ in Proposition \ref{prop:bigradedcomplex}, along with the right-invariance of $g_P$ and the compatibility of the metric with the bigrading, we see that on right-invariant forms
\[ d^{0,1*} = (-1)^i\pi^*(d^*_\fg),\quad d^{1,0*} = \pi^*(d^*_\nabla),\quad d^{2,-1*} = (-1)^i\pi^*(\iota^*_\Omega) .\]
where the adjoints are induced by the metrics $g$ and $g_G$ on $TM$ and $\fg_P$, respectively.

Therefore, the dual complex of right-invariant forms on $P$ is naturally isomorphic to the dual of the bigraded complex in Proposition \ref{prop:bigradedcomplex}.  In particular, the adjoint $d^*_M:\Omega^i(M;\R) \to \Omega^{i-1}(M;\R)$ is the usual adjoint with respect to the metric $g_M$, and the adjoint $d^*_\fg $ is induced from the usual adjoint \eqref{eq:liealgcoh} to the Lie algebra derivative with respect to the metric $g_G$.

Finally, we note that the complexes ultimately relevant are $\{ \Omega^*(P), d_\dd\}$ and $\{ \Omega^*(P), d^*_\dd\}$.  As noted in the definition \eqref{eq:ddelta},
\[ d_\dd = d^{0,1} + \dd d^{1,0} + \dd^2 d^{2,-1}, \quad d^*_\dd = d^{0,1*} + \dd d^{1,0*} + \dd^2 d^{2,-1}.\]
Therefore, the rescaled complexes of right-invariant forms are isomorphic to the following two complexes.  These two pictures will be very helpful when following the proofs in the next section.
{\scriptsize
\begin{equation}\label{eq:complex}\hskip -.45in \xymatrix{ &\vdots &\vdots &\vdots &\vdots&\\
&\Omega^0(M;\Lambda^2\fg^*_P) \ar[u]_{d_\fg} \ar[r]^{\dd \nabla} \ar@{.>}[drr]^>>>>{\dd^2 \iota_\Omega} & \Omega^1(M;\Lambda^2\fg^*_P) \ar[u]_{-d_\fg} \ar[r]^{\dd d^\nabla} \ar@{.>}[drr]^>>>>{-\dd^2 \iota_\Omega}& \Omega^2(M;\Lambda^2\fg^*_P) \ar[u]_{d_\fg} \ar[r]^{\dd d^\nabla} \ar@{.>}[drr]^>>>>{\dd^2 \iota_\Omega}& \Omega^3(M;\Lambda^2\fg^*_P) \ar[u]_{-d_\fg} \ar[r]^{\dd d^\nabla}&\cdots\\
&\Omega^0(M; \fg^*_P)  \ar[u]_{d_\fg} \ar[r]^{\dd \nabla} \ar@{.>}[drr]^>>>>{\dd^2 \iota_\Omega}& \Omega^1(M; \fg^*_P) \ar[u]_{-d_\fg} \ar[r]^{\dd d^\nabla} \ar@{.>}[drr]^>>>>{-\dd^2 \iota_\Omega}& \Omega^2(M; \fg^*_P) \ar[u]_{d_\fg} \ar[r]^{\dd d^\nabla} \ar@{.>}[drr]^>>>>{\dd^2 \iota_\Omega}& \Omega^3(M; \fg^*_P) \ar[u]_{-d_\fg} \ar[r]^{\dd d^\nabla}&\cdots \\
&\Omega^0(M;\R) \ar[u]_{0} \ar[r]^{\dd d_M} & \Omega^1(M;\R) \ar[u]_{0} \ar[r]^{\dd d_M}& \Omega^2(M;\R) \ar[u]_{0} \ar[r]^{\dd d_M}& \Omega^3(M;\R) \ar[u]_{0} \ar[r]^{\dd d_M} &\cdots
} \end{equation}
\begin{equation}\label{eq:dualcomplex}\hskip -.45in \xymatrix{  &\vdots \ar[d]^{d^*_\fg} &\vdots \ar[d]^{-d^*_\fg}&\vdots \ar[d]^{d^*_\fg} &\vdots \ar[d]^{-d^*_\fg} \\
 &\Omega^0(M;\Lambda^2\fg^*_P)  \ar[d]^{d^*_\fg} &\Omega^1(M;\Lambda^2\fg^*_P) \ar[l]_{\dd d^*_\nabla} \ar[d]^{-d^*_\fg} &\Omega^2(M;\Lambda^2\fg^*_P) \ar[l]_{\dd d^*_\nabla }\ar[d]^{d^*_\fg} \ar@{.>}[ull]^>>>>>>{\dd^2 \iota^*_\Omega}&\Omega^3(M;\Lambda^2\fg^*_P) \ar[l]_{\dd d^*_\nabla} \ar[d]^{-d^*_\fg} \ar@{.>}[ull]^>>>>>>{-\dd^2 \iota^*_\Omega}&\cdots \ar[l]_{\dd d^*_\nabla} \\
&\Omega^0(M; \fg^*_P)  \ar[d]^{0} &\Omega^1(M; \fg^*_P) \ar[l]_{\dd d^*_\nabla} \ar[d]^{0} &\Omega^2(M; \fg^*_P) \ar[l]_{\dd d^*_\nabla} \ar[d]^{0} \ar@{.>}[ull]^>>>>{\dd^2 \iota^*_\Omega} &\Omega^3(M; \fg^*_P) \ar[l]_{\dd d^*_\nabla} \ar[d]^{0} \ar@{.>}[ull]^>>>>{-\dd^2 \iota^*_\Omega}&\cdots \ar[l]_{\dd d^*_\nabla} \ar@{.>}[ull]^>>>>{\dd^2 \iota^*_\Omega}\\
&\Omega^0(M;\R)  &\Omega^1(M;\R) \ar[l]_{\dd d^*_M} &\Omega^2(M;\R) \ar[l]_{\dd d^*_M} \ar@{.>}[ull]^>>>>{\dd^2 \iota^*_\Omega}&\Omega^3(M;\R) \ar[l]_{\dd d^*_M}  \ar@{.>}[ull]^>>>>{-\dd^2 \iota^*_\Omega}&\cdots \ar[l]_{\dd d^*_M} \ar@{.>}[ull]^>>>>{\dd^2 \iota^*_\Omega}
}\end{equation}}
\section{Calculation of the Harmonic Forms}\label{section:harmonicforms}

Using the machinery and terminology described in Sections \ref{section:ss} and \ref{section:bigradedcomplex}, we proceed with the calculations.  To review notation, $G$ is a connected compact Lie group, and $P\to M$ is a principal $G$-bundle with connection $\Theta$ over the closed, oriented Riemannian manifold $(M,g_M)$.  We denote this information by $(M,P,g_M,\Theta)$.  It useful to remember that any bundle $P$ is a pullback of the universal bundle
\[ \xymatrix{ &P \ar^{\pi}[d] \ar^{f^*}[r] &EG \ar^{\pi}[d]\\ &M \ar^{f}[r] &BG.} \]
The harmonic $i$-forms on $M$, with respect to $g_M$, are denoted $\cH^i(M)$, and $\cH^j(G)$ denotes harmonic $j$-forms on $G$ with respect to a bi-invariant metric.  As defined earlier, and whose existence is made clear by Theorem \ref{thm:adiabaticharmonic},
\[ \cH^i(P) \= \lim_{\dd\to0} \Ker \Delta^i_{g_\dd} = \lim_{\dd\to 0} \cH^i_{g_\dd}(P) \subset \Omega^i(P) \]
is the finite-dimensional space of harmonic forms on $P$ in the adiabatic limit, and $\cH^i(P) \iso H^i(P;\R)$ canonically.

Proposition \ref{prop:bigradedcomplex} gives the natural isomorphism of bigraded cochain complexes
\[ \pi^*: \{ \Omega^i(M;\Lambda^j \fg_P^*)\} \overset{\iso}\longrightarrow \{\Omega^{i,j}(P)\}^G \subset \{ \Omega^{i,j}(P) \}. \]
In the following, we perform all caculations in terms of the left-hand complex (avoiding unnecessary use of $\pi^*$), but we state all major theorems in terms of $\Omega^*(P)$.  Finally, $\{E^{i,j}_K\}$ denotes the Leray--Serre spectral sequence, with $\R$-coefficients, for the fiber bundle $G\into P\to M$.

\begin{prop}\label{prop:basecoh}Given $(M,P,g_M,\Theta)$, assume that $E^{i,0}_2 = E^{i,0}_\infty$.  Then, 
\[ \pi^* \cH^i (M) \subset \cH^i(P).\]
\begin{proof}
Let $\omega \in \cH^i (M)$; i.e. $d_M\omega = d^*_M\omega=0$.  For dimensional reasons, the operators $d_\dd$ and $d^*_\dd$ take a slightly simpler form, and we calculate
\begin{align*} d_\dd \omega &= \dd d_M \omega = 0, \\
d^*_\dd \omega &=  \dd d^*_M \pm \dd^2 \iota_\Omega \omega = \pm \dd^2 \iota_\Omega \omega.
\end{align*}
Therefore, $\omega \in E^{i,0}_2 = E^{i,0}_\infty$.  By Corollary \ref{cor:Ei0toDelta}, $\omega \in \cH^i(P).$
\end{proof}\end{prop}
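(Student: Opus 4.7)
The plan is to use Corollary \ref{cor:Ei0toDelta}, which tells us that $E^{i,0}_\infty \subset \cH^i(P)$. Under the hypothesis $E^{i,0}_2 = E^{i,0}_\infty$, it therefore suffices to show that for any $\omega \in \cH^i(M)$, the pullback $\pi^*\omega$ lies in $E^{i,0}_2$. By Definition \ref{defn:epages} applied to $K=2$ (with no correction terms, i.e.\ $l=0$), this amounts to verifying $d_\dd \pi^*\omega \in \dd^2\Omega^{i+1}(P)[\dd]$ and $d^*_\dd \pi^*\omega \in \dd^2\Omega^{i-1}(P)[\dd]$.

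To carry this out, I would locate $\pi^*\omega$ in the bigraded complex as an element of $\Omega^i(M;\Lambda^0\fg_P^*) = \Omega^i(M;\R)$, and then read off the action of $d_\dd$ and $d^*_\dd$ directly from the diagrams \eqref{eq:complex} and \eqref{eq:dualcomplex}. For $d_\dd = d^{0,1} + \dd d^{1,0} + \dd^2 d^{2,-1}$, the first component acts as $(-1)^i d_\fg$ on the scalar-valued part and hence is zero on the bottom row of \eqref{eq:complex}; the $\dd$-term is $\dd d_M\omega$, which vanishes by harmonicity; and the $\dd^2$-term is $\pm\dd^2\iota_\Omega\omega$, which is zero for the trivial reason that $\omega$ has vertical degree zero. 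Thus $d_\dd\pi^*\omega = 0$ on the nose, which is much stronger than $O(\dd^2)$.

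For the adjoint $d^*_\dd$, the component $d^{0,1*}$ maps $\Omega^{i,0}$ into $\Omega^{i,-1} = 0$; the $\dd$-term gives $\dd d^*_M\omega = 0$ by harmonicity; only the $\dd^2$-term $\pm \dd^2\iota^*_\Omega\omega \in \Omega^{i-2}(M;\fg_P^*)$ potentially survives. This is $O(\dd^2)$, so $d^*_\dd \pi^*\omega \in \dd^2\Omega^{i-1}(P)[\dd]$. Hence $\pi^*\omega \in E^{i,0}_2$, and by hypothesis $\pi^*\omega \in E^{i,0}_\infty \subset \cH^i(P)$.

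There is no real obstacle here: the work has already been done in setting up Proposition \ref{prop:bigradedcomplex}, Corollary \ref{cor:Ei0toDelta}, and the diagrammatic descriptions \eqref{eq:complex}--\eqref{eq:dualcomplex}. The only thing to be careful about is making sure the hypothesis $E^{i,0}_2 = E^{i,0}_\infty$ is genuinely needed: one could imagine $\pi^*\omega$ sitting in $E^{i,0}_2$ but failing to survive to $E^{i,0}_\infty$ if higher differentials $d_K$ act nontrivially on it, which is precisely what the hypothesis rules out.
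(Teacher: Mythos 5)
Your proposal is correct and follows essentially the same route as the paper: compute $d_\dd\pi^*\omega$ and $d^*_\dd\pi^*\omega$ directly in the bigraded complex (no correction terms needed), conclude $\pi^*\omega \in E^{i,0}_2 = E^{i,0}_\infty$ by the hypothesis, and invoke Corollary \ref{cor:Ei0toDelta}. The only difference is that you spell out the dimension-counting steps (and correctly note the surviving $\dd^2$-term is the adjoint contraction $\iota^*_\Omega$) that the paper compresses into ``for dimensional reasons.''
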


\subsection{1-forms.}We realize that this calculation of $\cH^1(P)$ is more complicated than it needs to be.  However, it provides a similar but simpler calculation than that of $\cH^3(P)$ and helps clarify the logical structure.  In examining the cohomology of the bundle $P$, we use the standard isomorphism 
\[ H^i(M;H^j(G;\R)) \iso H^i(M;\R^{\dim H^j(G;\R)}) \iso H^i(M;\R) \otimes H^j(G;\R).\]
(The action of $\pi_1(M)$ on $H^*(G;\R)$ is trivial.)  The connectedness of $M$ and $G$ imply that $H^0(M;\R)\iso H^0(G;\R)\iso \R$.  Furthermore, the classifying map gives a morphism between the spectral sequences for the universal bundle $EG\to BG$ and $P\to M$.  The relevant portion of the $E_2$ page is:
\begin{align}\label{eq:1dimss} 
\def\sseqgridstyle{\ssgridcrossword}
\sseqentrysize=1.4cm
\sseqxstep=0
\sseqystep=0
\begin{sseq}{3}{2} 
\ssdrop{R}
\ssmove 1 0 \ssdrop{0}
\ssmove 1 0 \ssdrop{H^2(BG)}
\ssmoveto 0 1 \ssdrop{H^1(G)} \ssarrow{2}{-1}
\ssmove {-1}{ 1} \ssdrop{} \ssdrop{} \ssdrop{}
\ssdroplabel[D]{d_2}
\end{sseq} 
\quad \text{\raisebox{1.4cm}{ $\overset{f^*}\Longrightarrow $} }\quad 
\def\sseqgridstyle{\ssgridcrossword}
\sseqentrysize=1.4cm
\sseqxstep=0
\sseqystep=0
\begin{sseq}{3}{2} 
\ssdrop{R}
\ssmove 1 0 \ssdrop{H^1(M)}
\ssmove 1 0 \ssdrop{H^2(M)}
\ssmoveto 0 1 \ssdrop{H^1(G)}
\ssarrow{2}{-1}
\ssmove {-1}{ 1} \ssdrop{} \ssdrop{} \ssdrop{}
\ssdroplabel[D]{d_2}
\end{sseq} 
\end{align} 
Examining the universal example, we know $EG$ is contractible, so $E_\infty^{i,j}=0$ for $(i,j)\neq(0,0)$.  This implies $d_2$ is an isomorphism between $H^1(G;\R)$ and the universal characteristic classes $H^2(BG;\R)$.  If $\phi \in H^1(G;\R)$, then let 
\[ \phi(EG)\= d_2\phi \in H^2(BG;\R)  \text{ and }\phi(P) \= f^*(\phi(EG)) \in H^2(M;\R). \]  By naturality of the spectral sequence, the class $f^*\phi = \phi \in H^1(G;\R)$, and 
\[ d_2\phi = d_2f^*\phi = f^*d_2\phi  =\phi(P) \in H^2(M;\R). \]
Therefore, the trangression $d_2$ sends an element in $H^1(G)$ to the corresponding characteristic class of the bundle $P$.  The only non-trivial differential in spectral sequence for $H^1(P;\R)$ is $d_2$, so we see that 
\[ E^1_\infty = E^{1,0}_2 \oplus \Ker d_2(E^{0,1}_2).\]
Therefore, $H^1(P;\R)$ sits in a short exact sequence
\begin{equation}\label{eq:h1P} 0 \to H^1(M;\R) \to H^1(P;\R) \to \Ker d_2 \to 0.\end{equation}

  Chern--Simons/Chern--Weil theory gives a more explicit geometric interpretation of this.  In a slight abuse of notation, let $\phi\in \fg^*$ be $Ad$-invariant, so $\phi$ is equivalent to an element of $\cH^1(G) \iso H^1(G;\R)$.  Via $d_2$, $\phi$ also determines an element in $H^2(BG;\R)$.  The choice of a connection $\Theta \in \Omega^1(P;\fg)$ constructs the Chern--Simons 1-form $\phi(\Theta)\in \Omega^1(P)$ and Chern--Weil 2-form $\phi(\Omega)\in \Omega^2(M)$, which satisfy the properties
\begin{itemize}
\item $i_x^* \phi(\Theta) = \phi(\theta) \in \cH^1(G),$
\item $R^*_g \phi(\Theta) = \phi(\Theta) \in \Omega^1(P),$
\item $d\phi(\Theta) = \phi (\Omega - \frac{1}{2}[\Theta \wedge \Theta]) = \phi(\Omega) \in \pi^*\Omega^2(M).$
\end{itemize}
In other words, $\phi(\Theta)$ is right-invariant, restricts to a canonical element in $\cH^1(G)$ on the fibers, and its derivative is the Chern--Weil form $\phi(\Omega)$.  Furthermore, $\phi(\Omega)$ is closed and its de Rham cohomology class is independent of the chosen connection $\Theta$.  Doing this carefully in the universal case implies that
\[ [\phi(\Omega)] = \phi(P)  \in H^2(M;\R).\]

To make this more concrete, consider the case of $G=U(n)$.  At the level of cohomology, $H^1(U(n);\R)\iso H^1(U(1);\R) \iso H^1(S^1;\R)$ is 1-dimensional and has a canonical generator.  The image, under $d_2$, of this canonical generator is denoted $c_1\in H^2(BU(n);\R)$, and $f^*c_1$ is the usual first Chern class $c_1(P)\in H^2(M;\R)$.  At the level of forms, let
\[ \phi(\bullet) = \frac{i}{2\pi}\tr(\bullet) \in u(n)^*.\]
This gives the Chern--Simons 1-form $\frac{i}{2\pi}\tr(\Theta) \in \Omega^1(P)$ and the first Chern form $\frac{i}{2\pi}\tr(\Omega) \in \pi^*\Omega^2(M)$.  It is important to note that if $c_1(P) =0 \in H^2(M;\R)$, the form $\frac{i}{2\pi}\tr(\Omega)$ is exact but not necessarily zero.

\begin{theorem}\label{theorem:1dim}Consider $(M,P,g_M,\Theta)$ where $G$ is a connected compact Lie group, and $\phi \in (\fg^*)^{Ad}$.  If $\phi(P) =0 \in H^2(M,\R)$, then
\[ \phi(\Theta) - \pi^*h \in \cH^1(P),\]
where $h\in \Omega^1(M)$ is the unique form such that $dh=\phi(\Omega)$ and $h\in d^*\Omega^2(M)$.  In fact, the form $\phi(\Theta)-\pi^*h$ is harmonic with respect to the metric $g_\dd$ for any $\dd >0$.
\begin{proof}
First, note that the vanishing of $\phi(P)\in H^2(M;\R)$ implies that the form $\phi(\Omega)\in \Omega^2(M)$ is exact.  The Hodge decomposition \eqref{eq:hodgedecomp} implies there exists a unique $h$ satisfying our assumptions.  We now proceed by explicit calculation using the bigraded complexes \eqref{eq:complex} and \eqref{eq:dualcomplex}.
\begin{align*}
d_\dd( \phi(\Theta) - \dd h) &= \rho_\dd d ( \phi(\Theta) - h) = \rho_\dd (\phi(\Omega) - \phi(\Omega)) = 0.
\end{align*}

For dimensional reasons, several terms in the $d^*_\dd$ calculation are automatically 0, leaving only:
\begin{align*}
d^*_\dd \phi(\Theta) &=\dd d^*_\fg \phi(\Theta) = 0, \\
d^*_\dd h &= \dd d^*_M h = 0.
\end{align*}
The top equation holds because $\phi(\Theta)$ restricts to a harmonic form on the fibers, and the bottom equation holds by the assumption $h\in d^*\Omega^2(M)$.  Therefore, $d^*_\dd ( \alpha(\Theta)-h)=0$.  This calculation can be visualized by using the complexes \eqref{eq:complex} and \eqref{eq:dualcomplex}.  Let dotted lines denote maps which send a element to zero, and let $\pm$ show when the images of two elements cancel.  This looks like:
\begin{align*} \def\sseqgridstyle{\ssgridcrossword}
\sseqentrysize=1cm
\sseqxstep=0
\sseqystep=0
\begin{sseq}{3}{3}
\ssmoveto{0}{1} \ssdrop{\phi(\Theta)}
\ssmoveto{1}{0} \ssdrop{\delta h}
\ssmoveto{0}{1} \ssdashedarrow{0}{1} \ssdropbull
\ssmoveto{0}{1} \ssdashedarrow{1}{0} \ssdropbull
\ssmoveto{0}{1} \ssarrow{2}{-1} \ssdrop{\pm}
\ssmoveto{1}{0} \ssdashedarrow{0}{1}
\ssmoveto{1}{0} \ssarrow{1}{0}
\end{sseq}
\qquad \qquad
\def\sseqgridstyle{\ssgridcrossword}
\sseqentrysize=1cm
\sseqxstep=0
\sseqystep=0
\begin{sseq}{3}{2}
\ssmoveto{0}{1} \ssdrop{\phi(\Theta)}
\ssmoveto{1}{0} \ssdrop{\delta h}
\ssmoveto{0}{1} \ssdashedarrow{0}{-1} \ssdropbull
\ssmoveto{1}{0} \ssdashedarrow{-1}{0}
\end{sseq} \end{align*}

We have shown $\phi(\Theta) - \dd h \in \Ker L_\dd$.  Proposition \ref{prop:basecoh} implies that $E^{1,0}_\infty = \cH^1(M)$, and we have assumed that $h \in d^*\Omega^2(M)$, which is orthogonal to $\cH^1(M)$.  We can now use Proposition \ref{prop:EtoDelta}, and
\begin{align*} \rho_\dd^{-1} \left( \phi(\Theta) - \dd h \right) = \phi(\Theta) - h \in \cH^1(P). \end{align*}

Finally, we notice that we do not have to consider the adiabatic limit in order for $\phi(\Theta)-\pi^*h$ to be harmonic.  Since $d = \rho_\dd^{-1} d_\dd \rho_\dd$, our calculation above shows that
\begin{align*}
d(\phi(\Theta)-h) &= \rho_\dd^{-1} d_\dd (\alpha(\Theta) - \dd h) = 0, \\
d^*(\phi(\Theta)-h) &= \rho_\dd^{-1} d^*_\dd (\alpha(\Theta) - \dd h) = 0
\end{align*}
\end{proof}\end{theorem}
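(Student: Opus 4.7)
The plan is to verify directly that $\phi(\Theta) - \pi^*h$ is both closed and co-closed, by exploiting the bigraded decomposition from Proposition \ref{prop:bigradedcomplex} and the fact that both summands sit in very low bidegree, so most components of $d$ and $d^*$ vanish automatically. Here $\phi(\Theta) \in \Omega^{0,1}(P)^G$ is purely vertical (because $\Theta$ annihilates horizontal vectors), and $\pi^*h \in \Omega^{1,0}(P)^G$ is purely horizontal; this rigid bidegree bookkeeping is what makes the theorem tractable.

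First I would extract $h$: since $\phi(P) = 0$ in $H^2(M;\R)$ and $[\phi(\Omega)] = \phi(P)$, the form $\phi(\Omega)$ is exact, so the Hodge decomposition \eqref{eq:hodgedecomp} yields a unique $h \in d^*\Omega^2(M)$ with $dh = \phi(\Omega)$. Next I would work in the rescaled complexes \eqref{eq:complex} and \eqref{eq:dualcomplex} and compute $d_\dd(\phi(\Theta) - \dd h)$ piece by piece. The key identity is $d\phi(\Theta) = \phi(\Omega)$, which, since $\phi(\Omega)$ is of bidegree $(2,0)$, forces $d^{0,1}\phi(\Theta) = d^{1,0}\phi(\Theta) = 0$ and $d^{2,-1}\phi(\Theta) = \phi(\Omega)$. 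Meanwhile, for $h \in \Omega^1(M;\R)$ viewed as sitting in $\Omega^{1,0}$, only $d^{1,0}h = dh = \phi(\Omega)$ survives. Applying $d_\dd = d^{0,1} + \dd d^{1,0} + \dd^2 d^{2,-1}$ produces $\dd^2\phi(\Omega) - \dd \cdot \dd\, \phi(\Omega) = 0$.

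For co-closedness, I would compute $d^*_\dd(\phi(\Theta) - \dd h)$ via the decomposition $d^*_\dd = d^{0,1*} + \dd d^{1,0*} + \dd^2 d^{2,-1*}$. On $\phi(\Theta) \in \Omega^{0,1}$, both $d^{1,0*}$ and $d^{2,-1*}$ would land in negative bidegrees, hence vanish, while $d^{0,1*}\phi(\Theta) = d^*_\fg \phi = 0$ because $\phi$ is $Ad$-invariant and hence harmonic on fibers by Proposition \ref{prop:liealgharmonic}. On $h \in \Omega^{1,0}$, again $d^{0,1*}$ and $d^{2,-1*}$ vanish for bidegree reasons, while $d^{1,0*}h = d^*_M h = 0$ because $h \in d^*\Omega^2(M)$ implies $h = d^*\eta$, so $d^*_M h = (d^*_M)^2 \eta = 0$.

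Having verified $d_\dd(\phi(\Theta) - \dd h) = 0$ and $d^*_\dd(\phi(\Theta) - \dd h) = 0$ identically in $\dd$, the form $\phi(\Theta) - \dd h$ is $L_{g_\dd}$-harmonic for every $\dd > 0$. Pulling back through the isometry $\rho_\dd^{-1}$, which divides the $(i,j)$-component by $\dd^i$, gives $\rho_\dd^{-1}(\phi(\Theta) - \dd h) = \phi(\Theta) - h$, an actual $\Delta_{g_\dd}$-harmonic form for every $\dd > 0$, and in particular an element of $\cH^1(P)$. The main obstacle is just keeping the bidegree bookkeeping and the $\dd$-powers straight; no analytic subtlety enters because the form is exactly harmonic at every scale, and the adiabatic limit is essentially only used as an organizational framework to explain why the combination $\phi(\Theta) - h$ is the correct one to try.
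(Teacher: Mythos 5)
Your proposal is correct and follows essentially the same route as the paper: verify in the bigraded complex that $\phi(\Theta)-\dd h$ is killed by $d_\dd$ and $d^*_\dd$, using $d^*_\fg\phi(\Theta)=0$ (harmonicity on fibers, Proposition \ref{prop:liealgharmonic}) and $d^*_M h=0$ (coexactness of $h$). The only difference is cosmetic: you skip the intermediate appeal to Propositions \ref{prop:basecoh} and \ref{prop:EtoDelta} and conclude directly from exact harmonicity at every scale $\dd>0$, which is precisely the observation the paper itself makes in the final paragraph of its proof.
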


\begin{rem}The fact that $\phi(\Theta) - \pi^*h$ is harmonic for any $\dd >0$ is special to 1-forms.  In particular, explicit calculations of harmonic 3-forms on a bundle show that one must take an adiabatic limit for Theorem \ref{thm:harmonic3forms} to hold.
\end{rem}

Proposition \ref{prop:basecoh} and Theorem \ref{theorem:1dim} allow us to calculate $\cH^1(P)$ for any principal $G$-bundle $P$ when $G$ is compact connected.  Proposition \ref{prop:basecoh} gives representatives for the classes corresponding to $H^1(M;\R)$.  If we choose a basis for $\Ker d_2$, we use Theorem \ref{theorem:1dim} to obtain representatives of these classes.  Therefore, by the short exact sequence \eqref{eq:h1P}, these forms will together provide a basis for $\cH^1(P)$.  

\begin{cor}\label{cor:unitary1dim}Consider $(M,P, g_M,\Theta)$ where $G=U(n)$.  Then,
\[ \cH^1(P) = \begin{cases} \pi^* \cH^1(M) &\text{if } c_1(P) \neq 0 \in H^2(M;\R)\\
\pi^*\cH^1(M) \oplus \R[\csu - \pi^*h] &\text{if } c_1(P) = 0 \in H^2(M;\R)
\end{cases} \]
\begin{proof}
The term $E^{1,0}_2=E^{1,0}_\infty \iso H^1(M;\R)$, so Proposition \ref{prop:basecoh} implies that $\pi^*\cH^1(M)\subset \cH^1(P)$.  If $c_1(P) \neq 0$, then $\dim H^1(P;\R) = \dim H^1(M;\R)$, so we are done.

If $c_1(P)=0 \in H^2(M;\R)$, Theorem \ref{theorem:1dim} implies that $\csu - \pi^*h \in \cH^1(P)$.  The vector space spanned by it and $\pi^*\cH^1(M)$ has dimension equal to that of $H^1(P;\R)$, so we are done.
\end{proof}\end{cor}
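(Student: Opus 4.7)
The plan is to combine Proposition \ref{prop:basecoh} and Theorem \ref{theorem:1dim} with a dimension count extracted from the short exact sequence \eqref{eq:h1P}. Since the canonical isomorphism $\cH^1(P) \iso H^1(P;\R)$ controls the dimensions on both sides, it suffices to exhibit the claimed explicit forms inside $\cH^1(P)$ and then check they span a subspace of the right dimension.

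First, I would verify the hypothesis of Proposition \ref{prop:basecoh} in the case $i=1$. The only potentially nontrivial differential out of the $E_K^{1,0}$ position lands in $E_K^{K+1,\,-K+1}$, which vanishes for $K \geq 2$ for dimension reasons. Hence $E^{1,0}_2 = E^{1,0}_\infty \iso H^1(M;\R)$, and Proposition \ref{prop:basecoh} gives the inclusion $\pi^*\cH^1(M) \subset \cH^1(P)$ in both cases.

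Next I would use the short exact sequence \eqref{eq:h1P} together with the fact that $H^1(U(n);\R) \iso \R$ is generated by the class of $\phi = \frac{i}{2\pi}\tr \in \fu(n)^*$, whose transgression under $d_2$ is precisely $c_1(P) \in H^2(M;\R)$ by naturality of the Leray--Serre spectral sequence applied to the classifying map. If $c_1(P) \neq 0$, then $\Ker d_2 = 0$, so \eqref{eq:h1P} forces $\dim H^1(P;\R) = \dim H^1(M;\R)$, and the inclusion $\pi^*\cH^1(M) \subset \cH^1(P)$ is already an equality. If instead $c_1(P) = 0$, then $\Ker d_2 \iso \R$, so $\dim H^1(P;\R) = \dim H^1(M;\R) + 1$; applying Theorem \ref{theorem:1dim} to the $Ad$-invariant form $\phi = \frac{i}{2\pi}\tr$ produces the class $\csu - \pi^*h \in \cH^1(P)$, and it only remains to show this class is linearly independent from $\pi^*\cH^1(M)$.

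The main obstacle, such as it is, is the linear independence check, and this is resolved by restricting to a fiber: by the properties of the Chern--Simons 1-form recalled just before Theorem \ref{theorem:1dim}, the pullback $i_x^*(\csu - \pi^*h) = \phi(\theta)$ is a nonzero harmonic 1-form on the fiber $U(n)_x$, whereas any form in $\pi^*\cH^1(M)$ vanishes on fibers. Thus the sum $\pi^*\cH^1(M) + \R[\csu - \pi^*h]$ is direct, and matches the dimension of $\cH^1(P)$ computed above, completing the argument.
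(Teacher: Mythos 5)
Your proposal is correct and follows essentially the same route as the paper: Proposition \ref{prop:basecoh} gives the inclusion $\pi^*\cH^1(M)\subset\cH^1(P)$, Theorem \ref{theorem:1dim} supplies $\csu-\pi^*h$ when $c_1(P)=0$, and the short exact sequence \eqref{eq:h1P} provides the dimension count. Your fiberwise-restriction argument for linear independence just makes explicit a point the paper leaves implicit, so there is nothing substantive to change.
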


\subsection{1-forms and 2-forms for $G$ semisimple.}Now, consider the case where $G$ is a compact, connected, semisimple Lie group.  The semisimplicity of $G$ implies that the decomposition of $\fg$ has no abelian factors, and hence $H^1(G;\R) = H^2(G;\R)=0$ \cite{CE48}.  The Leray--Serre spectral sequence then takes the form
{\scriptsize \[
\def\sseqgridstyle{\ssgridcrossword}
\sseqentrysize=1cm
\sseqxstep=0
\sseqystep=0
\begin{sseq}{5}{4} 
\ssdrop{R}
\ssmove 1 0 \ssdrop{0}
\ssmove 1 0 \ssdrop{0}
\ssmove 1 0 \ssdrop{0}
\ssmove 1 0 \ssdrop{H^4(BG)}
\ssmove{-4}{1} \ssdrop{0}
\ssmove 1 0 \ssdrop{0}
\ssmove 1 0 \ssdrop{0}
\ssmove 1 0 \ssdrop{0}
\ssmove 1 0 \ssdrop{0}
\ssmove{-4}{1} \ssdrop{0}
\ssmove 1 0 \ssdrop{0}
\ssmove 1 0 \ssdrop{0}
\ssmove 1 0 \ssdrop{0}
\ssmove 1 0 \ssdrop{0}
\ssmove{-4}{1} \ssdrop{H^3(G)}
\ssarrow{4}{-3}
\ssmove{-2}{2} \ssdroplabel[RD]{d_4}
\end{sseq} 
\quad
\text{\raisebox{2cm}{ $\overset{f^*}\Longrightarrow $} }
\quad
\def\sseqgridstyle{\ssgridcrossword}
\sseqentrysize=1cm
\sseqxstep=0
\sseqystep=0
\begin{sseq}{5}{4} 
\ssdrop{R}
\ssmove 1 0 \ssdrop{H^1(M)}
\ssmove 1 0 \ssdrop{H^2(M)}
\ssmove 1 0 \ssdrop{H^3(M)}
\ssmove 1 0 \ssdrop{H^4(M)}
\ssmove{-4}{1} \ssdrop{0}
\ssmove 1 0 \ssdrop{0}
\ssmove 1 0 \ssdrop{0}
\ssmove 1 0 \ssdrop{0}
\ssmove 1 0 \ssdrop{0}
\ssmove{-4}{1} \ssdrop{0}
\ssmove 1 0 \ssdrop{0}
\ssmove 1 0 \ssdrop{0}
\ssmove 1 0 \ssdrop{0}
\ssmove 1 0 \ssdrop{0}
\ssmove{-4}{1} \ssdrop{H^3(G)}
\ssarrow{4}{-3}
\ssmove{-2}{2} \ssdroplabel[RD]{d_4}
\end{sseq} 
\] }
\begin{prop}\label{prop:baseharm}Consider $(M,P,g_M, \Theta)$ where $G$ is a compact, connected, semisimple Lie group.  Then,
\begin{align*} \pi^* \cH^i(M) &= \cH^i(P) \text{ for } i=1,2. \\
\pi^* \cH^3(M) &\subset \cH^3(P).
\end{align*}
\begin{proof}
For $i=1,2,3$, $E^{i,0}_2 \iso H^i(M;\R)$ is not in the image of any non-trivial differential, implying that $\cH^i(M) = E^{i,0}_2=E^{i,0}_\infty$.  Proposition \ref{prop:basecoh} then implies $\pi^* \cH^i(M) \subset \cH^i(P)$ for $i=1,2,3$.  The spectral sequence also implies that for $i=1,2$, $H^i(P;\R) \iso H^i(M;\R)$, so the inclusion of $\pi^*\cH^i(M)$ spans $\cH^i(P)$.
\end{proof}\end{prop}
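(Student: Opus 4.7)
The plan is to reduce everything to a spectral sequence calculation and then invoke Proposition \ref{prop:basecoh}. The first task is to verify the hypothesis $E^{i,0}_2 = E^{i,0}_\infty$ for $i = 1, 2, 3$. A differential $d_K : E^{i-K,K-1}_K \to E^{i,0}_K$ can only be nonzero if its source is nonzero. Since $G$ is compact, connected, and semisimple, the Chevalley--Eilenberg calculation cited earlier gives $H^1(G;\R) = H^2(G;\R) = 0$, so $E^{*,1}_2 = E^{*,2}_2 = 0$. This kills all differentials $d_2$ and $d_3$ into the bottom row. For $K \geq 4$, the source $E^{i-K,K-1}_K$ sits in horizontal degree $i - K < 0$ whenever $i \leq 3$, hence is zero. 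Thus $E^{i,0}_2 = E^{i,0}_\infty$ for $i \leq 3$, and Proposition \ref{prop:basecoh} yields $\pi^*\cH^i(M) \subset \cH^i(P)$ for $i = 1, 2, 3$.

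The second task is to upgrade this inclusion to an equality when $i = 1, 2$. Since $\cH^i(P) \iso H^i(P;\R)$, it suffices to show that $\dim H^i(P;\R) = \dim H^i(M;\R)$ in these degrees. The filtration associated to the Leray--Serre spectral sequence gives
\[ \dim H^i(P;\R) = \sum_{j=0}^{i} \dim E^{i-j, j}_\infty.\]
For $i \leq 2$ and $j \geq 1$, we have $j \in \{1, 2\}$, and so $E^{i-j, j}_2 = H^{i-j}(M; H^j(G;\R)) = 0$ by the vanishing of $H^1(G;\R)$ and $H^2(G;\R)$. Consequently $E^{i-j,j}_\infty = 0$ for these $j$, leaving $H^i(P;\R) \iso E^{i,0}_\infty \iso H^i(M;\R)$.

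For $i = 3$, the same dimension argument fails because $E^{0,3}_\infty = \ker\bigl(d_4 : H^3(G;\R) \to H^4(M;\R)\bigr)$ may contribute nontrivially to $H^3(P;\R)$. This is precisely the slack that Theorem \ref{thm:harmonic3forms} will later fill in using the Chern--Simons $3$-form, so we only claim the inclusion $\pi^*\cH^3(M) \subset \cH^3(P)$ here.

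There is no genuine obstacle; the whole proof is really a bookkeeping exercise on the spectral sequence picture drawn immediately above the statement. The only point requiring a brief justification is the vanishing $H^1(G;\R) = H^2(G;\R) = 0$ for semisimple $G$, which is standard and already invoked by the paper.
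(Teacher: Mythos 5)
Your proof is correct and follows the same route as the paper: vanishing of $H^1(G;\R)$ and $H^2(G;\R)$ forces $E^{i,0}_2 = E^{i,0}_\infty$ for $i\leq 3$, Proposition \ref{prop:basecoh} gives the inclusions, and the degeneration of the spectral sequence in total degrees $1,2$ gives $H^i(P;\R)\iso H^i(M;\R)$, hence equality. You merely spell out the bookkeeping (which differentials could hit the bottom row, and the filtration dimension count) that the paper leaves implicit.
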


\subsection{3-forms for $G$ simple.}Analyzing the above spectral sequence for $G$ semisimple, we see that the contractibility of $EG$ implies 
\[ d_4:H^3(G;\R) \overset{\iso}\longrightarrow H^4(BG;\R).\]
Therefore, choosing generators of $H^3(G;\R)$ determines universal characteristic classes in $H^4(BG;\R)$, and $d_4$ maps $H^3(G;\R)$ to the pullback of these classes in any manifold $M$.  As $d_4: H^3(G;\R) \to H^4(M;\R)$ is the only non-trivial differential in the sequence calculating $H^3(P;\R)$, we have the short exact sequence
\begin{equation}\label{eq:kerd4} 0\to H^3(M;\R) \to H^3(P;\R) \to \Ker d_4 \to 0.\end{equation}

Just as in the discussion for $H^1(P;\R)$, this has a natural geometric interpretation from Chern--Simons and Chern--Weil theory.  If $\langle \cdot, \cdot \rangle$ is an $Ad$-invariant, symmetric, bilinear form on $\fg$, then $\langle \theta \wedge [\theta\wedge \theta] \rangle \in \Omega^3(G)$ is a bi-invariant 3-form and hence an element of $\cH^3(G)$.  Also, $\langle \cdot, \cdot \rangle$ naturally gives the Chern--Weil 4-form $\cwfour \in \Omega^4(M)$ whose de Rham class is the pullback of the universal class in $H^4(BG;\R)$.  The corresponding Chern--Simons 3-form for $\cwfour$ is 
\begin{equation}\label{eq:chernsimons3form} \csthree = \langle \Omega \wedge \Theta \rangle- \frac{1}{6} \langle \Theta \wedge [ \Theta \wedge \Theta] \rangle = \alpha^{2,1} + \alpha^{0,3} \in \Omega^3(P).\end{equation}
The notation $\alpha^{2,1} + \alpha^{0,3}$ denotes the decomposition of $\csthree$ with respect to the bigrading on $P$.  The Chern--Simons 3-form also satisfies \cite{Fre95}
\begin{itemize}
\item $i^*_x \csthree =  -\frac{1}{6}\langle \theta \wedge [\theta \wedge \theta] \rangle \in \cH^3(G)$,
\item $R^*_g\csthree = \csthree \in \Omega^3(P)$,
\item $d \csthree = \cwfour + \frac{1}{2} \langle \Omega \wedge [ \Theta \wedge \Theta] \rangle - \frac{1}{2} \langle \Omega \wedge [\Theta \wedge \Theta] \rangle = \cwfour \in \pi^*\Omega^4(M)$.
\end{itemize}
In other words, $\csthree$ is a right-invariant form that restricts to a canonical element in $\cH^3(G)$ on the fibers, and its derivative is $\cwfour$.

To make this more concrete, consider the case of $G=SU(n)$ for $n\geq 2$.  Then, $H^3(SU(n);\R) \iso H^3(SU(2);\R) \iso H^3(S^3;\R) \iso \R$, and there is a standard generator.  The image of this generator under $d_2:H^3(SU(n);\R)\to H^4(BSU(n);\R)$ is the universal second Chern class $c_2$.  The inner product $\langle \cdot, \cdot \rangle$ is a constant multiple of the matrix trace, giving 
\[c_2(P,\Omega) = \frac{1}{8\pi^2}\>\tr(\Omega \wedge \Omega).\]
Similarly, when $G=Spin(n)$ for $n=3$ or $n\geq 5$, $H^3(Spin(n);\R) \iso \R$ and there is a standard generator whose image is sent to the characteristic class $\phalf$ (which, when multiplied by 2, is the ordinary first Pontryagin class $p_1$.)  The inner product is again a constant multiple of the matrix trace.

While the above discussion holds for semisimple Lie groups, we now assume that $G$ is simple.  This implies that $H^3(G;\R) \iso \R$.  This is important, because it implies that if $\cwfour$ is exact, then $E_2^{0,3}=E_\infty^{0,3}$.  The following theorem uses that the sequence collapses at $N=2$.  If $H^3(G;\R)$ were 2-dimensional, and $\Ker d_4$ was 1-dimensional, the following proof would not suffice.

\begin{theorem}\label{thm:harmonic3forms}Consider $(M,P,g_M, \Theta)$ where $G$ is a simple Lie group.  Suppose $\cwfour$ is exact, so that the de Rham class is $0\in H^4(M;\R)$.  Then
\[ \alpha(\Theta) - \pi^*h \in \cH^3(P),\]
where $h\in \Omega^3(M)$ is the unique form such that $dh= \cwfour$ and $h\in d^*\Omega^4(M)$.
\begin{proof}
Supposing $\cwfour$ is exact implies, by Hodge decomposition \eqref{eq:hodgedecomp}, the existence of such a form $h$.  This also implies that $E^3_2 = E^3_\infty$.  In the calculations, we deal with
\[ \rho_\dd ( \alpha^{0,3} + \alpha^{2,1} - h^{3,0} ) = \alpha^{0,3} + \dd^2\alpha^{2,1} - \dd^3 h^{3,0}.\]
To use Proposition \ref{prop:EtoDelta}, we will show that $\alpha^{0,3} + \dd^2\alpha^{2,1} - \dd^3h$ extends formally to an element of $\Ker L_\dd$.  To do this, we first show that $\alpha^{0,3} + \dd^2 \alpha^{2,1}$ extends formally to $\Ker L_\dd$ by proving that for some $\psi$,
\begin{align*}
d_\dd ( &\alpha^{0,3} + \dd^2 \alpha^{2,1} + \dd^3 \psi ) \in \dd^4 \Omega^4(P)[\dd], \\
d^*_\dd ( &\alpha^{0,3} + \dd^2 \alpha^{2,1} + \dd^3 \psi ) \in \dd^4 \Omega^2(P)[\dd],
\end{align*}
and then applying Proposition \ref{prop:basis}.  Then, we show the term $\psi^{0,3}$ must be $h$ and apply Proposition \ref{prop:EtoDelta}.

First, we see that
\begin{align*} d_\dd \left( \alpha^{0,3} + \dd^2 \alpha^{2,1} - \dd^3 h^{3,0} \right) &= \rho_\dd d \left( \csthree - h \right) \\
&= \rho_\dd  \left( \cwfour - \cwfour\right) = 0, \end{align*}
but there is a non-trivial term of order $\dd^3$ in
\begin{align*}
d^*( \alpha^{0,3} + \dd^2 \alpha^{2,1} - \dd^3 h^{3,0}) &= d^*_\fg \alpha^{0,3} + \dd^3 d^*_\nabla \alpha^{2,1} - \dd^3 d^*_M h^{3,0} + O(\dd^4) \\
&= \dd^3 d^*_\nabla \alpha^{2,1} + O(\dd^4).
\end{align*}
We listed all pertinent non-trivial components in the $d^*_\dd$ calculation above.  All others are either trivially zero (for dimensional reasons) or have order greater than $\dd^3$.  Because $\alpha^{0,3}$ is harmonic on fibers, $d^*_\fg \alpha^{0,3} =0$.  Likewise, we assume $h$ coexact as a form in $M$, which implies $d^*_M h=0$.

We now must cancel the term
\[ d^*_\nabla \alpha^{2,1} \in \Omega^1(M; \fg_P^*).\]
Fortunately, $H^1(\fg) = 0$, so the Hodge decomposition implies $\fg_P^* = \text{coexact}(\fg_P^*)$.  Thus, $d^*_\fg: \text{exact}(\Lambda^2 \fg_P^*) \to \fg^*_P$ is an isomorphism, and we define
\begin{equation}\label{eq:betaterm}
\beta^{1,2} \= (d^*_\fg)^{-1} \left( d^*_\nabla \alpha^{2,1} \right) .
\end{equation}
The details of this are discussed in the following Lemma \ref{lem:betaterm}.  The important properties of $\beta^{1,2}$ are that
\[ d^*_\fg \beta^{1,2} = d^*_\nabla \alpha^{2,1}, \quad d_\fg \beta^{1,2} =0.\]

We now calculate that
\begin{align*}\label{eq:dorderfour} d_\dd \left( \alpha^{0,3} + \dd^2 \alpha^{2,1} - \dd^3 h - \dd^3 \beta^{1,2} \right) &= - \dd^3 d_\dd \beta^{1,2} \\
& =- \dd^3 d_\fg \beta^{1,2} + O(\dd^4) = 0 +O(\dd^4)
\\ \ \\
d^*_\dd \left( \alpha^{0,3} + \dd^2 \alpha^{2,1} - \dd^3 h^{3,0} - \dd^3 \beta^{1,2} \right) &= 
\dd^3 \left( d^*_\nabla \alpha^{2,1} - d^*_\fg \beta^{1,2} \right) + O(\dd^4) \\
&= 0+ O(\dd^4).
\end{align*}
The structure of the two above calculations can be visualized by using the complexes \eqref{eq:complex} and \eqref{eq:dualcomplex}.  This is shown below by letting dotted lines denote maps which send an element to zero, and letting $\pm$ show when the images of two elements cancel.  All elements of order greater than $\dd^3$ are ignored.
\[ \def\sseqgridstyle{\ssgridcrossword}
\sseqentrysize=1cm
\sseqxstep=0
\sseqystep=0
\begin{sseq}{5}{5} 
\ssmoveto{0}{3} \ssdrop{\alpha^{0,3}}
\ssmove{1}{-1} \ssdrop{\delta^3\beta^{1,2}}
\ssmove{1}{-1} \ssdrop{\dd^2\alpha^{2,1}}
\ssmove{1}{-1} \ssdrop{-\dd^3h^{3,0}}
\ssmoveto{0}{3} \ssdashedarrow{0}{1} \ssdropbull
\ssmoveto{0}{3} \ssdashedarrow{1}{0} \ssdropbull
\ssmoveto{0}{3} \ssarrow{2}{-1} \ssdrop{\pm}
\ssmoveto{2}{1} \ssarrow{0}{1}
\ssmoveto{2}{1} \ssdashedarrow{1}{0} \ssdropbull
\ssmoveto{2}{1} \ssarrow{2}{-1} \ssdrop{\pm}
\ssmoveto{3}{0} \ssdashedarrow{0}{1}
\ssmoveto{3}{0} \ssarrow{1}{0}
\ssmoveto{1}{2} \ssdashedarrow{0}{1}
\end{sseq} 
\quad
\quad
\def\sseqgridstyle{\ssgridcrossword}
\sseqentrysize=1cm
\sseqxstep=0
\sseqystep=0
\begin{sseq}{5}{4} 
\ssmoveto{0}{3} \ssdrop{\alpha^{0,3}}
\ssmove{1}{-1} \ssdrop{\delta^3\beta^{1,2}}
\ssmove{1}{-1} \ssdrop{\dd^2\alpha^{2,1}}
\ssmove{1}{-1} \ssdrop{-\dd^3h^{3,0}}
\ssmoveto{0}{3} \ssdashedarrow{0}{-1} \ssdropbull
\ssmoveto{1}{2} \ssarrow{0}{-1} \ssdrop{\pm}
\ssmoveto{2}{1} \ssdashedarrow{0}{-1} \ssdropbull
\ssmoveto{2}{1} \ssarrow{-1}{0}
\end{sseq} \]

We note that $\alpha^{2,1} \in E^\perp_\infty$ for bi-grading reasons, since the spectral sequence satisfies
\[ E^3_\infty = E^{3,0}_\infty \oplus E^{0,3}_\infty.\]
Proposition \ref{prop:basis} therefore implies the existence of a power series
\[ \omega_\dd = \alpha^{0,3} + \dd^2 \alpha^{2,1} + \dd^3 \omega_3 + \cdots \in \Omega^3(P)\]
for some $\omega_3, \omega_4, \ldots \in E_\infty^\perp$, such that formally $d_\dd \omega_\dd = d^*_\dd \omega_\dd = 0$.

Now, investigate the term $\omega_3^{3,0}$ in $\omega_\dd$.  The equation 
\[ 0 = d_\dd \omega_\dd = \dd^4 \left( \cwfour + d_M \omega_3^{3,0} \right) + O(\dd^5) \]
implies that $d_M \omega_3^{3,0} =- \cwfour$.  The equation $d^*_\dd \omega_\dd =0$, when restricted to the $(2,0)$ component, shows that
\[ d^*_M \omega_3^{3,0} = 0.\]
For $\omega_3$ to be an element of $E^\perp_\infty$, $\omega_3$ must be orthogonal to $E^{3,0}_\infty = \cH^3(M)$ (calculated in Proposition \ref{prop:baseharm}).  Therefore, $\omega_3^{3,0} =-h,$ where $h \in \Omega^3(M)$ is the unique form such that 
\[ d_h =  \cwfour, \quad d^* h = 0, \quad h \perp \cH^3(M).\]
Therefore, there exists a formal $L_\dd$-harmonic power series $\omega_\dd$ is of the form
\[ \omega_\dd = \alpha^{0,3} + \dd^2 \alpha^{2,1} - \dd^3 h^{3,0} + \dd^3( \omega_3 - h^{3,0}) + \dd^4 \omega_4 + \cdots.\]

Finally,
\begin{align*} \rho_\dd ^{-1}&\left( \alpha^{0,3} + \dd^2 \alpha^{2,1} + \dd^3 h^{3,0} + \dd^3(\omega_3-h^{3,0} ) + O(\dd^4) \right) \\
&= \alpha^{0,3} + \alpha^{2,1} - h^{3,0} + O(\dd).
\end{align*}
The polynomial satisfies the conditions of Proposition \ref{prop:EtoDelta}, so the constant term in $\rho_\dd^{-1}(\alpha^{0,3}  +\dd^2\alpha^{2,1} -\dd^3h + \cdots)$ is the adiabatic limit of a harmonic form on $P$:
\[ \alpha^{0,3} + \alpha^{2,1} - h^{3,0} \in \cH^3(P).\]
\end{proof}\end{theorem}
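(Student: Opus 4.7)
The plan is to verify the hypotheses of Proposition \ref{prop:EtoDelta} for the rescaled form $\rho_\dd(\csthree - \pi^*h) = \alpha^{0,3} + \dd^2 \alpha^{2,1} - \dd^3 h^{3,0}$. Concretely, I want to extend this polynomial to a formal power series $\omega_\dd \in \Omega^3(P)\bd$ with $d_\dd \omega_\dd = d^*_\dd \omega_\dd = 0$, whose higher-order corrections lie in $E^\perp_\infty$, and then read off the constant term after applying $\rho_\dd^{-1}$.

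First, I compute $d_\dd$ on the partial series. Since $d(\csthree - \pi^*h) = \cwfour - \cwfour = 0$ globally, conjugating by $\rho_\dd$ kills $d_\dd$ on $\alpha^{0,3} + \dd^2\alpha^{2,1} - \dd^3 h^{3,0}$ exactly. Next I compute $d^*_\dd$ using the bigraded description \eqref{eq:dualcomplex}. The component $d^*_\fg \alpha^{0,3} = 0$ because $\alpha^{0,3}$ is harmonic on each fiber (being a bi-invariant form on $G$), and $d^*_M h = 0$ by choice of $h$. The surviving obstruction is a single term of order $\dd^3$ in bidegree $(1,2)$, namely $\dd^3 d^*_\nabla \alpha^{2,1}$.

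The main obstacle is to cancel this term, and here the simplicity of $G$ enters. Semisimplicity forces $H^1(\fg)=0$, so by Hodge theory for the fiberwise complex the map $d^*_\fg : \text{exact}(\Lambda^2\fg^*_P) \to \fg^*_P$ is an isomorphism. I can therefore define $\beta^{1,2} \= (d^*_\fg)^{-1}(d^*_\nabla \alpha^{2,1})$, which is automatically $d_\fg$-closed. Subtracting $\dd^3 \beta^{1,2}$ cancels the obstruction in $d^*_\dd$ without reintroducing anything in $d_\dd$ to order $\dd^3$. Thus $\alpha^{0,3} + \dd^2 \alpha^{2,1} - \dd^3 h^{3,0} - \dd^3 \beta^{1,2}$ is $L_\dd$-harmonic modulo $\dd^4$. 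Since $\alpha^{2,1}, \beta^{1,2}$, and $h^{3,0}$ (which is orthogonal to $\cH^3(M) = E^{3,0}_\infty$ by construction) all lie in $E^\perp_\infty$, Proposition \ref{prop:basis} extends this to a unique formal $L_\dd$-harmonic series $\omega_\dd = \alpha^{0,3} + \dd^2 \alpha^{2,1} + \dd^3 \omega_3 + \cdots$ with all higher corrections in $E^\perp_\infty$.

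It remains to identify the $(3,0)$ component of $\omega_3$. Examining the equation $d_\dd \omega_\dd = 0$ at order $\dd^4$ in bidegree $(4,0)$ gives $d_M \omega_3^{3,0} = -\cwfour$, while the $(2,0)$ piece of $d^*_\dd \omega_\dd = 0$ at order $\dd^4$ yields $d^*_M \omega_3^{3,0} = 0$. Together with orthogonality to $E^{3,0}_\infty = \cH^3(M)$, Hodge decomposition forces $\omega_3^{3,0} = -h$. Applying Proposition \ref{prop:EtoDelta} to $\omega_\dd$, the constant term in $\rho_\dd^{-1}\omega_\dd$ picks out exactly $\alpha^{0,3} + \alpha^{2,1} - h^{3,0} = \csthree - \pi^*h$, which therefore lies in $\cH^3(P)$. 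The hardest step is the identification of the cancelling term $\beta^{1,2}$ and the verification that the Leray--Serre edge at $E^{3,0}_\infty = \cH^3(M)$ forces the correct choice of primitive $h$; everything else is a bookkeeping exercise on the bigraded complexes \eqref{eq:complex} and \eqref{eq:dualcomplex}.
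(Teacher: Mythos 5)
Your proposal is correct and follows essentially the same route as the paper's proof: the exact vanishing of $d_\dd$ on $\alpha^{0,3}+\dd^2\alpha^{2,1}-\dd^3h^{3,0}$, cancellation of the surviving $\dd^3 d^*_\nabla\alpha^{2,1}$ term by $\beta^{1,2}=(d^*_\fg)^{-1}(d^*_\nabla\alpha^{2,1})$ using $H^1(\fg)=0$, extension via Proposition \ref{prop:basis}, identification of $\omega_3^{3,0}=-h$ from the order-$\dd^4$ equations and orthogonality to $E^{3,0}_\infty=\cH^3(M)$, and conclusion via Proposition \ref{prop:EtoDelta}. No gaps to report.
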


\begin{cor}\label{cor:harmonic3forms}Consider $(M,P,g_M, \Theta)$ where $G=SU(n)$ for $n\geq 2$.  Then, letting $c_2$ be the standard generator of the 1-dimensional space $H^4(BSU(n);\R)$,
\[ \cH^3(P) = \begin{cases} \pi^* \cH^3(M) & \text{ if } c_2 (P) \neq 0 \in H^4(M;\R)\\
\pi^*\cH^3(M) \oplus \R[\csthree - \pi^*h] & \text{ if } c_2 (P) =0 \in H^4(M;\R)
\end{cases}\]
where $h\in \Omega^3(M)$ is the unique form such that $dh= \cwfour$ and $h\in d^*\Omega^4(M)$.

When $G=Spin(n)$ for $n=3$ or $n\geq 5$, the above statement holds with $c_2$ replaced by $\phalf$.
\begin{proof}The proof is exactly the same as Corollary \ref{cor:unitary1dim}.  We combine Proposition \ref{prop:baseharm} with Theorem \ref{thm:harmonic3forms} and the exact sequence
\[ 0\to H^3(M;\R) \to H^3(P;\R) \to H^3(G;\R) \overset{d_4}\to H^4(M;\R),\]
noting that $\Ker d_4$ is either 0 or 1-dimensional, depending solely on whether the form $\cwfour$ is exact.
\end{proof}\end{cor}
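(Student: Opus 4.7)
The plan is to reduce the corollary to a dimension count, once the two key inputs---Proposition~\ref{prop:baseharm} and Theorem~\ref{thm:harmonic3forms}---are in hand. Proposition~\ref{prop:baseharm} supplies the inclusion $\pi^{*}\cH^{3}(M) \subset \cH^{3}(P)$, and Theorem~\ref{thm:harmonic3forms} provides the adiabatic-harmonic form $\csthree - \pi^{*}h \in \cH^{3}(P)$ whenever $\cwfour$ is exact. The content of the corollary is then to check that these forms account for all of $\cH^{3}(P) \cong H^{3}(P;\R)$.

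First I would exploit the short exact sequence
\[ 0 \to H^{3}(M;\R) \to H^{3}(P;\R) \to \Ker d_{4} \to 0 \]
coming from the Serre spectral sequence of $G \into P \to M$, where $d_{4}\colon H^{3}(G;\R) \to H^{4}(M;\R)$ is the transgression. For $G = SU(n)$ with $n \geq 2$, and similarly for $G = Spin(n)$ with $n = 3$ or $n \geq 5$, the space $H^{3}(G;\R)$ is one-dimensional, and by naturality applied to the universal bundle $EG \to BG$ the transgression sends its standard generator to $c_{2}(P)$ (respectively $\phalf(P)$) in $H^{4}(M;\R)$. Thus $\Ker d_{4}$ is either $0$ or all of $H^{3}(G;\R)\cong\R$, according to whether this characteristic class is nonzero or zero in $H^{4}(M;\R)$.

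In the case $c_{2}(P) \neq 0$, the dimension of $H^{3}(P;\R)$ equals that of $H^{3}(M;\R)$, so since $\pi^{*}$ is injective on cohomology the inclusion $\pi^{*}\cH^{3}(M) \subset \cH^{3}(P)$ is forced to be an equality. In the case $c_{2}(P) = 0$, Theorem~\ref{thm:harmonic3forms} gives the form $\csthree - \pi^{*}h \in \cH^{3}(P)$, and I would check its linear independence from $\pi^{*}\cH^{3}(M)$ using the bigrading of Section~\ref{section:bigradedcomplex}: the subspace $\pi^{*}\cH^{3}(M)$ lies in $\Omega^{3,0}(P)$, while by \eqref{eq:chernsimons3form} the form $\csthree$ has a nontrivial $(0,3)$-component equal to the fiberwise Chern--Simons form, which is a bi-invariant generator of $\cH^{3}(G) \neq 0$. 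Consequently $\pi^{*}\cH^{3}(M) \oplus \R[\csthree - \pi^{*}h]$ is a subspace of $\cH^{3}(P)$ of dimension $\dim H^{3}(M;\R) + 1 = \dim \cH^{3}(P)$, and equality follows.

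The argument exactly parallels that of Corollary~\ref{cor:unitary1dim}, and essentially all the real work has already been carried out in Theorem~\ref{thm:harmonic3forms}. The only nontrivial hypothesis hidden in this corollary is that $G$ be simple, so that $H^{3}(G;\R) \cong \R$ is one-dimensional; this is what reduces the gap between $H^{3}(P;\R)$ and $H^{3}(M;\R)$ to a single scalar condition, allowing the lone adiabatic-harmonic form supplied by Theorem~\ref{thm:harmonic3forms} to suffice. If $G$ were merely semisimple, $\Ker d_{4}$ could be a proper nonzero subspace of $H^{3}(G;\R)$ and one would need to produce a harmonic form for each generator of $\Ker d_{4}$, which is where the real difficulty would lie.
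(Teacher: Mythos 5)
Your proposal is correct and follows essentially the same route as the paper: combine the inclusion $\pi^*\cH^3(M)\subset\cH^3(P)$ from Proposition~\ref{prop:baseharm} with the form $\csthree-\pi^*h$ from Theorem~\ref{thm:harmonic3forms}, and use the exact sequence $0\to H^3(M;\R)\to H^3(P;\R)\to \Ker d_4\to 0$ for the dimension count, exactly as in Corollary~\ref{cor:unitary1dim}. Your explicit check of linear independence via the $(0,3)$-component of $\csthree$ is a minor elaboration the paper leaves implicit.
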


\begin{lemma}\label{lem:betaterm}For any $\psi \in \Omega^i(M;\fg_P^*)$ there exists $\beta \in \Omega^i(M;\Lambda^2 \fg_P^*)$ such that
\[ d_\fg \beta = 0, \quad d^*_\fg \beta = \psi.\]
\begin{proof}
This lemma is just the Green's function for the Laplacian on $G$.  As noted earlier, a semisimple $G$ has $H^1(\fg) = 0$.  Therefore, Harm$(\fg^*) =0$, and the Hodge decomposition of
\[ \xymatrix{ &0 \ar[r] &\R \ar[r]^0\ar@/_1pc/[l]  &\fg^* \ar[r]^{d_\fg}\ar@/_1pc/[l]_{0} &\Lambda^2\fg^* \ar[r]^{d_\fg} \ar@/_1pc/[l]_{d_\fg^*} &\cdots \ar@/_1pc/[l]_{d_\fg^*} , } \]
gives 
\[ \fg^* = \text{Harm}(\fg^*)\oplus \text{exact}(\fg^*)\oplus \text{coexact}(\fg^*) = \text{coexact}(\fg^*).\]
The map
\[ d^*_\fg: \text{exact}(\Lambda^2 \fg^*) \to \fg^* \]
is an isomorphism.  Hence, we have a bundle morphism
\[ (d^*_\fg)^{-1}: \fg_P^* \to \Lambda^2 \fg^*_P .\]
The image of $(d^*_\fg)^{-1}$ is exact$(\Lambda^2\fg^*_P)$, so $d_\fg \o (d^*_\fg)^{-1} = 0$.  Applying $(d^*_\fg)^{-1}$ to any $\psi \in \Omega^i(M;\fg_P^*)$ gives the desired form.
\end{proof}\end{lemma}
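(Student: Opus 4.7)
The lemma is fundamentally a pointwise linear-algebra assertion on the Lie algebra fiber: we wish to invert $d^*_\fg$ from a specific subspace of $\Lambda^2\fg^*$ onto $\fg^*$ and then extend by the associated-bundle construction. The plan is to (i) show that $d^*_\fg$ restricted to exact 2-cochains is a linear isomorphism onto $\fg^*$, (ii) promote this to an $Ad$-equivariant isomorphism of vector bundles over $M$, and (iii) define $\beta$ by applying the inverse fiberwise to the values of $\psi$.

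For step (i), apply the Hodge decomposition to the finite-dimensional complex \eqref{eq:liealgcoh} using the $Ad$-invariant inner product on $\fg$. This yields
\[ \fg^* \= \cH^1(\fg) \oplus d_\fg(\R) \oplus d^*_\fg\bigl(\Lambda^2 \fg^*\bigr). \]
The middle summand is zero because the differential out of $\R$ in \eqref{eq:liealgcoh} is the zero map. The first summand vanishes because $G$ semisimple gives $H^1(\fg;\R)=0$ (Whitehead's first lemma, as noted earlier in the paper via \cite{CE48}). Hence $d^*_\fg$ maps $\Lambda^2\fg^*$ onto $\fg^*$, and restricting to the exact subspace $d_\fg(\fg^*) \subset \Lambda^2\fg^*$ it becomes an isomorphism: surjective by the computation just made, and injective because $\operatorname{Ker} d^*_\fg$ is orthogonal to the image of $d_\fg$ in the Hodge decomposition of $\Lambda^2\fg^*$.

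For step (ii), the operators $d_\fg$ and $d^*_\fg$ are both $Ad$-equivariant (the former since the Lie bracket is $Ad$-equivariant, the latter because the inner product on $\fg$ is $Ad$-invariant), so the linear isomorphism $(d^*_\fg)^{-1}$ from step (i) is itself $Ad$-equivariant. It therefore descends to a morphism of associated bundles
\[ (d^*_\fg)^{-1} \: \fg_P^* \longrightarrow \Lambda^2 \fg_P^* \]
whose image lies in the ``exact'' subbundle. For step (iii), define $\beta \= (d^*_\fg)^{-1} \psi \in \Omega^i(M;\Lambda^2\fg_P^*)$ by applying this bundle map to the coefficient values of $\psi$. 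Then $d^*_\fg \beta = \psi$ by construction, and $d_\fg \beta = 0$ because $\beta$ takes values in the exact subbundle and $d_\fg^2 = 0$.

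There is no real obstacle here; the only substantive input is the semisimplicity hypothesis, which enters exactly to ensure $H^1(\fg;\R) = 0$ and is already in force. The mildly delicate point is observing that the fiberwise inverse is $Ad$-equivariant so that it makes sense as a bundle map --- but this follows automatically from $Ad$-invariance of the inner product used to define $d^*_\fg$.
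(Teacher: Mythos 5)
Your proof is correct and follows essentially the same route as the paper: Hodge decomposition of the finite-dimensional Lie algebra complex, vanishing of $\cH^1(\fg)$ by semisimplicity, inversion of $d^*_\fg$ on the exact 2-cochains, and application fiberwise via the associated-bundle construction. The only difference is that you spell out the $Ad$-equivariance needed for $(d^*_\fg)^{-1}$ to descend to a bundle morphism, which the paper leaves implicit.
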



\bibliographystyle{alphanum}
\bibliography{mybibliography}

\end{document}